\DeclareMathAlphabet{\mathpzc}{OT1}{pzc}{m}{it} 
\DeclarePairedDelimiter\abs{\lvert}{\rvert}%
\newcommand{\R}{\mathbb{R}}
\newcommand{\Z}{\mathbb{Z}}
\newcommand{\E}{\mathbb{E}}
\newcommand{\T}{\mathbb{T}}
\newcommand{\myrightarrow}[1]{\xrightarrow{\makebox[2em][c]{$\scriptstyle#1$}}}
\DeclarePairedDelimiter\norm{\lVert}{\rVert}%
\DeclareMathOperator\supp{supp}
\newtheorem{theorem}{Theorem}[section]
\newtheorem{lemma}[theorem]{Lemma}
\newtheorem{proposition}[theorem]{Proposition}
\theoremstyle{definition}
\newtheorem{definition}[theorem]{Definition}
\newtheorem{remark}{Remark}
\title{Scaling limit of stationary coupled Sasamoto-Spohn models}
\author[Ian Butelmann and Gregorio R. Moreno Flores]{Ian Butelmann$^1$ and Gregorio R. Moreno Flores$^{2}$}
\address{Facultad de Matem\'aticas\\
Pontificia Universidad Cat\'olica de Chile\\
Vicu\~na Mackenna 4860, Macul\\
Santiago, Chile}
\email{ibutelmann@uc.cl, grmoreno@mat.uc.cl}
\thanks{$^{1,2}$ Facultad de Matem\'aticas, Pontificia Universidad Cat\'olica de Chile.}
\thanks{$^{1}$  Partially supported by Fondecyt grant 1171257}
\thanks{$^2$  Partially supported by Fondecyt grant 1211189, N\'ucleo Milenio `Modelos Estoc\'asticos de Sistemas Complejos y Desordenados' and MATH Amsud `Random Structures and Processes in Statistical Mechanics'}
\begin{document}

\begin{abstract}
	We introduce a family of stationary coupled Sasamoto-Spohn models and show that, in the weakly asymmetric regime, they converge to the energy solution of coupled Burgers equations. Moreover, we show that any system of coupled Burgers equations satisfying the so-called trilinear condition ensuring stationarity can be obtained as the scaling limit of a suitable system of coupled  Sasamoto-Spohn models.
	
	The core of our proof, which avoids the use of spectral gap estimates, consists in a second order Boltzmann-Gibbs principle for the discrete model. 
\end{abstract}

\maketitle

\tableofcontents


\section{Introduction, model and results}


The purpose of this work is twofold. First, we introduce a spatial discretization of coupled Burgers equations with an explicit invariant distribution. Second, we show that, at stationarity and in the weakly asymmetric regime, this model converges to the energy solution of coupled Burgers equations.

Respect to the first point, the most elementary spatial discretization of the single component Burgers equations is known as the Sasamoto-Spohn model \cite{KrS, LamAndShin, SasamotoSpohn}. Our model consists in a generalization to the multi-components setting. As we discuss below, the crucial point consists in providing a careful definition of the non-linear term at the discrete level to obtain a tractable invariant distribution (see Proposition \ref{thm:invariant-distribution} below). 

Respect to the second point, the convergence of the Sasamoto-Spohn model to the Burgers equation in the weakly asymmetric regime was shown in \cite{GP-KPZ} and \cite{jaraMoreno}. Our main result Theorem \ref{mainResult} is a generalization of the  second of these references and, as such, belongs to a long tradition of convergence results inside the KPZ universality class, dating back at least to the seminal work of Bertini and Giacomin \cite{BG}. We refer the reader to \cite{Corwin, Q} for extensive reviews of the literature on the KPZ universality class, including convergence theorems for everal models.

We start with a brief presentation of coupled Burgers equations in Section \ref{sec:coupledBurgers}. Then, we introduce our model and state our main result in Section \ref{ssmodel}.


\subsection{Coupled Burgers equations}
\label{sec:coupledBurgers}


We consider systems of coupled Burgers equations i.e. processes $u = (u_{1},\dots,u_{K})$ with $K \geq 1$ components satisfying systems of stochastic partial differential equations of the form
\begin{equation}
\label{csbeFunaki}
        \partial_{t} u_{k} = \frac{1}{2} \partial_{x}^{2} u_{k} + \sum_{i,j=1}^{K} \Gamma_{i,j}^{k} \partial_{x} (u_{i} u_{j}) + \partial_{x} \mathscr{W}_{k}, \quad 1 \leq k \leq K,
\end{equation}
where $(\mathscr{W}_{k})_{k}$ is a family of independent space-time white noises. Here, the space variable runs either on $\R$ or on the one-dimensional torus $\T$.

Coupled Burgers equations first appeared in the physics literature at the beginning of the nineties in the study of random interfaces \cite{ertasKardar}. A few years later, they reappeared in several contexts, including dynamics of crystals \cite{Lahiri}, magnetohydrodynamics \cite{Yanase, fleischerDIamond, basuetal} and sedimenting suspensions \cite{levineetal}. Later, they where used heuristically to compute the asymptotics of correlations in anharmonic chains \cite{mendlSpohn, spohnAnharmonic} and multi-species particle systems \cite{ferrariSasamotoSpohn}. We point the reader to \cite{spohnAnharmonic} for a deeper review of the physics literature. As in the one-component case, coupled Burgers equations can be seen as the evolution of the slope in systems of coupled KPZ equations.

At the mathematical level, they share the same ill-posedness problems as the usual one layer Burgers equation, namely, the solutions are distribution-valued, which makes the non-linear terms ill-defined in principle. They were formalized in the framework of paracontrolled distributions \cite{GIP} in \cite{funakiHoshino} on the torus, where it was also proved that, under the so-called trilinear condition, 
\begin{equation}\label{eq:trilinear-condition}
    \Gamma_{i,j}^{k} = \Gamma_{j,i}^{k} = \Gamma_{k,j}^{i},
\end{equation} 
they admit the product of independent space white noise as an invariant distribution. Under this condition, the theory of energy solution, which is the point of view we adopt in this work, was developed in \cite{GP-generator}. Existence and uniqueness is shown on the torus but uniqueness on the whole line is still an open question. Recently, this theory was applied to show the convergence of stationary multi-species zero-range processes \cite{bernardinFunakiSethuraman}. 
We point the interested reader to \cite{Patricia-review} for a deeper discussion.

We will state the precise definition of energy solutions of coupled Burgers equations in Section \ref{sec:ES}.


\subsection{Coupled Sasamoto-Spohn models and main result}
\label{ssmodel}


First, we recall the one component Sasamoto-Spohn model. This is a spatial discretization of the stochastic Burgers equation (i.e. \eqref{csbeFunaki} with $K=1$) consisting of a system of coupled diffusions $u=(u_{j})_{j}$ that satisfy
\begin{equation*}
    d u_{j} = \frac{1}{2} \Delta u_{j} + \epsilon B_{j}(u) + d\xi_{j} - d \xi_{j-1}
\end{equation*}
where  the index $j$ runs either on $\Z$ or $\Z_{M} = \Z/M\Z$, $(\xi_{j})_{j}$ is an i.i.d. family of standard one-dimensional Brownian motions,
\begin{equation*}
    \Delta u_{j} = u_{j+1} + u_{j-1} - 2 u_{j},
\end{equation*}
is the discrete Laplacian
and $B(u)$ is a discretization of the non-linear term in Burgers equation defined as
\begin{equation*}
    B_{j}(u) = w_{j} - w_{j-1}
    \quad
    \text{ with } 
    \quad
    w_{j} = \frac{1}{3}(u_{j}^{2} + u_{j} u_{j+1} + u_{j+1}^{2}).
\end{equation*}
This model was introduced in \cite{KrS} (see also \cite{LamAndShin}) and further studied in \cite{SasamotoSpohn}.
It is known that the product of independent Gaussians is stationary. 
At a first sight, they would be much simpler ways to discretize the non-linear term. However, this precise definition seems to be the simplest one such that the product of Gaussians is invariant.
Convergence to the stochastic Burgers equation was obtained in \cite{GP-KPZ} and \cite{jaraMoreno} in the weakly-asymmetric regime i.e. scaling time by $n^2$, space by $n$ and taking $\epsilon = \epsilon_{n} = n^{-\frac{1}{2}}$.

We consider the following generalization: let $u = (u_{1},\dots, u_{K})$ be a vector of stochastic processes, where each $u_{k} = (u_{k,j})_{j}$ satisifies
\begin{equation}
    \label{eq:SS-propio}
    d u_{k,j} = \frac{1}{2} \Delta u_{k,j} + \epsilon B_{k,j}(u) + d\xi_{k,j} - d\xi_{k,j-1}, \quad k \in \Z_{K}, \ j \in \Z_{M}.
\end{equation}
As above, $\Delta u_{k,j} = u_{k,j+1} + u_{k,j-1} - 2 u_{k,j}$ and $(\xi_{k,j})_{k,j}$ are independent Brownian motions. Now, $B_{k,j}(u)$ is a quadratic polynomial in the variables $u_{k,j}$ defined as
\begin{eqnarray*}
    B_{k,j} &=& G_{k,j} - G_{k,j-1},
    \\
    G_{k,j}  &=& \alpha_{k} w_{k,j} + \sum_{l \in \Z_{K}^{*}} \beta_{k}^{l} b_{k,j}^{l} + \sum_{l \in \Z_{K}^{*}} \gamma_{k}^{l} r_{k,j}^{l} +\sum_{l \in \Z_{K}^{*}} \sum_{\substack{l' \in \Z_{K}^{*} \\ l' \neq l}} \lambda_{k}^{k-l,k-l'} p_{k,j}^{l,l'},
\end{eqnarray*}
where $\Z^*_K = \Z_K \backslash \{0\}$, operations are considered modulo $K$, and
\begin{eqnarray*}
	w_{k,j} &=& \frac{1}{3}(u_{k,j}^{2} + u_{k,j}u_{k,j+1} + u_{k,j+1}^{2}),
	\\
	b_{k,j}^{l} &=& \frac{1}{2}(u_{k,j}u_{k+l,j} + u_{k,j+1}u_{k+l,j+1}),
	\\
	 r_{k,j}^{l} &=& u_{k-l,j}u_{k-l,j+1},
	 \\
	 p_{k,j}^{l,l'} &=& \frac{1}{6}( 2u_{k-l,j}u_{k-l',j} + u_{k-l,j}u_{k-l',j+1} + u_{k-l,j+1}u_{k-l',j} + 2u_{k-l,j+1}u_{k-l',j+1}).
\end{eqnarray*}
\begin{remark}
At first glance it would be easier to write $\lambda_{k}^{l,l'}$ instead of $\lambda_{k}^{k-l,k-l'}$ above, but this would complicate the correspondence between our model and the coupled Burgers equations \eqref{csbeFunaki}. This is discussed in Lemma \ref{thm:equivalence-trilinear} below.
\end{remark}
Our model can look overly complicated but, as in the single component case, the discretization of the nonlinear term has to be carefully performed in order to obtain a tractable invariant measure.
We make the following assumptions on the coefficients, 
\begin{align}
\label{betaGamma}
    \beta_{k}^{a} & = 2 \gamma_{k+a}^{a} \\
    \label{lambda}
    \lambda_{k}^{k-a,k-a'} & = \lambda_{k}^{k-a',k-a} = \lambda_{k-a}^{k,k-a'}
\end{align}
for all $k,a,a' \in \Z_{K}$.

Let $\mu_{K,M} = (\rho^{\otimes \Z_{K}})^{\otimes \Z_{M}}$, where $d\rho(x) = \frac{1}{\sqrt{2 \pi}} e^{-\frac{x^{2}}{2}}dx$.
We denote the density of $\mu_{K,M}$ by $\rho_{K,M}$.
\begin{proposition}\label{thm:invariant-distribution}
	Assume \eqref{betaGamma} and \eqref{lambda}.
	Then, the law $\mu_{K,M}$ defined above is invariant for the dynamics given by \eqref{eq:SS-propio}.
\end{proposition}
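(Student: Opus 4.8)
The plan is to show that the Gaussian product measure $\mu_{K,M}$ is invariant by verifying that its density $\rho_{K,M}$ lies in the kernel of the formal adjoint of the generator $\mathcal{L}$ of the diffusion \eqref{eq:SS-propio}, or equivalently — since this is a diffusion — by decomposing $\mathcal{L}$ into a symmetric part (in $L^2(\mu_{K,M})$) and an antisymmetric part and checking that the antisymmetric part annihilates constants (equivalently, that its contribution to $\mathcal{L}^* \mathbf{1}$ vanishes). Concretely, write $\mathcal{L} = \mathcal{L}_0 + \epsilon \mathcal{A}$, where $\mathcal{L}_0$ is the generator of the linear part $du_{k,j} = \tfrac12 \Delta u_{k,j} + d\xi_{k,j} - d\xi_{k,j-1}$ and $\mathcal{A} = \sum_{k,j} B_{k,j}(u)\,\partial_{u_{k,j}}$ is the first-order operator coming from the drift $B$. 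The linear part is a discrete stochastic heat equation and it is classical that the standard Gaussian product measure is reversible for it, so $\mathcal{L}_0^* \mathbf{1} = 0$ in $L^2(\mu_{K,M})$; I would either cite this or dispatch it in a line. The entire content of the proposition is therefore the identity
\begin{equation*}
    \mathcal{A}^* \mathbf{1} \;=\; \sum_{k,j} \partial_{u_{k,j}}\!\big( B_{k,j}(u) \, \rho_{K,M}(u) \big)\Big/ \rho_{K,M}(u) \;=\; \sum_{k,j} \big( \partial_{u_{k,j}} B_{k,j} \;-\; u_{k,j} B_{k,j} \big) \;=\; 0,
\end{equation*}
using that $\partial_{u_{k,j}} \log \rho_{K,M} = -u_{k,j}$.

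The first sum $\sum_{k,j} \partial_{u_{k,j}} B_{k,j}$ should vanish identically by the discrete-gradient (telescoping) structure $B_{k,j} = G_{k,j} - G_{k,j-1}$: differentiating and summing over $j \in \Z_M$ telescopes to zero term by term once one observes that $\partial_{u_{k,j}} G_{k,j}$ and $\partial_{u_{k,j}} G_{k,j-1}$ match after the shift, because $w, b^l, r^l, p^{l,l'}$ are all built symmetrically from the two neighboring sites $j$ and $j+1$. The crux is then the cubic identity $\sum_{k,j} u_{k,j} B_{k,j} = \sum_{k,j} u_{k,j}(G_{k,j} - G_{k,j-1}) = 0$. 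Here I would do the summation by parts in $j$ to rewrite this as $\sum_{k,j} (u_{k,j} - u_{k,j+1}) G_{k,j}$ — wait, more carefully, $\sum_j u_{k,j} G_{k,j} - \sum_j u_{k,j} G_{k,j-1} = \sum_j (u_{k,j} - u_{k,j+1}) G_{k,j}$ — and then expand $G_{k,j}$ into its four families of monomials. Each resulting term is a cubic monomial in the $u$'s living on sites $j, j+1$ across various components; the antisymmetrizing factor $(u_{k,j} - u_{k,j+1})$ together with the coefficient constraints \eqref{betaGamma} and \eqref{lambda} should force a pairwise cancellation across the sum over $k$ (relabeling $k \mapsto k+l$, etc.) and over the neighboring-bond structure.

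The main obstacle, and where the real work lies, is the bookkeeping of this cubic cancellation: one must group the monomials of $\sum_{k,j}(u_{k,j} - u_{k,j+1})G_{k,j}$ by their "shape" (which components appear, and which of sites $j,j+1$ each factor sits on), and show that within each shape class the coefficients cancel after using $\beta_k^a = 2\gamma_{k+a}^a$ and the two-fold symmetry of $\lambda$. The diagonal family $w$ handles the single-component Burgers cancellation (already known from the $K=1$ case); the mixed families $b^l$ and $r^l$ must cancel against each other, which is precisely what \eqref{betaGamma} is engineered for; and the fully off-diagonal family $p^{l,l'}$ must be self-cancelling under the relabeling $l \leftrightarrow l'$ and the index shift, which is what the symmetry \eqref{lambda} provides. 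I would organize this as a short lemma enumerating the monomial shapes and their coefficients, then check the finitely many cases. No spectral or analytic input is needed — it is a finite, if intricate, algebraic verification.
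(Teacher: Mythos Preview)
Your proposal is correct and follows essentially the same route as the paper: split the generator into its symmetric part (for which the Gaussian product is reversible by a direct check) and the antisymmetric drift part, then reduce invariance to the algebraic identity $\sum_{k,j}\big(\partial_{u_{k,j}}B_{k,j}-u_{k,j}B_{k,j}\big)=0$, which the paper isolates as a separate lemma and proves by the same term-by-term cancellation you outline ($w$ self-cancels, $b$ pairs with $r$ via \eqref{betaGamma}, and $p$ self-cancels via \eqref{lambda}). The only cosmetic differences are that the paper invokes Echeverr\'ia's criterion explicitly and expands $u_{k,j}(G_{k,j}-G_{k,j-1})$ directly rather than first summing by parts in $j$; also, your telescoping justification for $\sum_{k,j}\partial_{u_{k,j}}B_{k,j}=0$ is slightly imprecise (for $b^l,r^l,p^{l,l'}$ the derivative actually vanishes pointwise, not merely after a shift, and only the $w$-contribution genuinely telescopes over $j$), but this is a detail you would discover upon writing it out.
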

This is proved in Section \ref{sec:invariant-measure}.

We now state our main result. We fix $K\geq 1$. For each, $k\in\Z_K$, we define the fluctuation field $\mathcal{X}_{k,\cdot}^{n}$ acting on test functions $\varphi\in\mathcal{S}(\T)$ as
\begin{equation*}
    \mathcal{X}_{k,t}^{n}(\varphi) = \frac{1}{n^{\frac{1}{2}}} \sum_{j \in \Z_{M}} u_{k,j}(tn^2) \varphi_{j}^{n},
    \quad
    \text{where}
    \quad
    \varphi_{j}^{n} = \varphi\left(\frac{j}{n}\right).
\end{equation*}
 Let $\mathbb{P}_{n}$ be the law of the process $u = (u_{1}, \dots, u_{K})$ with $\epsilon_{n} = n^{-\frac{1}{2}}$ and initial law $\mu_{K,n}$. The notion of energy solution alluded in the Theorem below will be properly defined in Section \ref{sec:ES}.
\begin{theorem}
\label{mainResult}
	Assume \eqref{betaGamma} and \eqref{lambda} and let $T>0$. Then, if $\epsilon_{n} = n^{-\frac{1}{2}}$, the sequence $(\mathcal{X}^{n})_{n}$ converges 	in law in $C([0,T], \mathcal{S}'(\mathbb{T})^K)$ to the unique energy solution of the coupled stochastic Burgers equations
	\begin{equation}\label{eq:limit-Burgers}
  	 	\partial_{t} u_{k} 
   		= 
  	 	\frac{1}{2}\partial_{x}^{2} u_{k} 
   		+ 
   		\partial_{x} B_k(u)
   		+ 
   		\partial_{x} \mathscr{W}_{k},
   		\quad
   		k\in\Z_k,
	\end{equation}
	where $\mathscr{W}_{1},\dots,\mathscr{W}_{K}$ are independent white noises and
	\begin{equation}
   		B_k(u)
   		= 
   		\alpha_{k} u_{k}^{2} 
   		+ 
   		\sum_{l \in \Z_{K}^{*}} \left\{
   			\beta_{k}^{l} u_{k} u_{k+l} 
   			+ 
   			\gamma_{k}^{l} u_{k-l}^{2} 
   		\right\}
   		+ 
   		\sum_{l \in \Z_{K}^{*}} \sum_{\substack{l' \in \Z_{K}^{*} \\ l' \neq l}} \lambda_{k}^{k-l,k-l'} u_{k-l}u_{k-l'}.
\end{equation}
\end{theorem}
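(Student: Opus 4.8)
The plan is to follow the now-standard scheme for proving convergence to energy solutions (as in \cite{jaraMoreno, GP-generator}), adapted to the multi-component setting. The argument splits into three parts: tightness of the sequence $(\mathcal{X}^n)_n$, identification of every limit point as an energy solution of \eqref{eq:limit-Burgers}, and appeal to the uniqueness of energy solutions from \cite{GP-generator} to conclude convergence in law. Since the initial law is the invariant measure $\mu_{K,n}$, at each fixed time the field $\mathcal{X}^n_{k,t}$ is a rescaled sum of i.i.d. Gaussians, so its law is explicit and one immediately gets the a priori bounds needed to start the machine; in particular the limiting one-time marginal is white noise, which fixes the invariant state of the limit.

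The heart of the matter is the analysis of the martingale decomposition of $\mathcal{X}^n_{k,t}(\varphi)$. Applying Dynkin's formula to \eqref{eq:SS-propio}, one writes
\[
  \mathcal{X}^n_{k,t}(\varphi)
  = \mathcal{X}^n_{k,0}(\varphi)
  + \int_0^t \mathcal{X}^n_{k,s}(\tfrac12\Delta_n\varphi)\,ds
  + \int_0^t \mathcal{A}^n_{k,s}(\varphi)\,ds
  + \mathcal{M}^n_{k,t}(\varphi),
\]
where $\Delta_n$ is the scaled discrete Laplacian (converging to $\tfrac12\partial_x^2$ after the time change), $\mathcal{M}^n_{k,t}(\varphi)$ is a martingale whose bracket is computed explicitly from the Brownian terms $d\xi_{k,j}-d\xi_{k,j-1}$ and converges to $\|\partial_x\varphi\|_{L^2}^2 t$ (yielding the white-noise forcing $\partial_x\mathscr{W}_k$ in the limit, with the correct independence across $k$), and $\mathcal{A}^n_{k,s}(\varphi)$ is the drift coming from $\epsilon_n B_{k,j}(u)$, a sum over $j$ of the quadratic quantities $w,b^l,r^l,p^{l,l'}$ tested against the discrete gradient of $\varphi$. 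With $\epsilon_n=n^{-1/2}$ and the $n^2$ time scaling, $\mathcal{A}^n$ is formally of order one. The key step — the second order Boltzmann–Gibbs principle for the discrete model, which the abstract advertises as the core of the paper — must be invoked here: it allows one to replace each local quadratic function $u_{k-l,j}u_{k-l',j}$ (etc.) by its ``renormalized square'' at the level of the fluctuation field, i.e. to show that $\int_0^t \mathcal{A}^n_{k,s}(\varphi)\,ds$ converges to the nonlinear term $\int_0^t \mathcal{A}_{k,s}(\partial_x\varphi)\,ds$ built from $B_k(u)$ in \eqref{eq:limit-Burgers}, in the sense required by the definition of energy solutions (an $L^2$-bound on additive functionals of the quadratic field, uniform in $n$). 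Here the algebraic bookkeeping matters: one must check that after summing by parts and collecting terms, the coefficients in front of $u_{k-l}u_{k-l'}$, $u_k u_{k+l}$, $u_{k-l}^2$, $u_k^2$ assemble into exactly $\lambda_k^{k-l,k-l'}$, $\beta_k^l$, $\gamma_k^l$, $\alpha_k$, using the relabeling in the Remark and the symmetry relations \eqref{betaGamma}, \eqref{lambda}; this is presumably the content of Lemma \ref{thm:equivalence-trilinear}, and the relations \eqref{betaGamma}--\eqref{lambda} are precisely what is needed both for the invariance (Proposition \ref{thm:invariant-distribution}) and for the self-consistency of the limit.

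For tightness, I would use the Mitoma/Aldous-type criterion: tightness of each real-valued process $\mathcal{X}^n_{k,\cdot}(\varphi)$ for $\varphi$ in a countable dense family. The linear term is controlled by the stationarity bound on $\mathcal{X}^n$; the martingale term is controlled via its explicit bracket (Burkholder–Davis–Gundy or a simple fourth-moment estimate, again using that the increments are Gaussian); and the drift term $\int \mathcal{A}^n\,ds$ is controlled by the same $L^2$-estimate on additive functionals that the Boltzmann–Gibbs principle delivers — this is why the second order version is needed rather than the first order one. Once tightness holds, any subnormalized limit point $\mathcal{X}$ is a stationary process with white-noise marginals, the martingale part converges to $\partial_x\mathscr{W}_k$, the linear part to $\tfrac12\partial_x^2 u_k$, and the quadratic part to $\partial_x B_k(u)$ in the energy-solution sense; together with the (equally important) reversed-process condition, which by stationarity follows from the same estimates applied to the time-reversed dynamics, this shows $\mathcal{X}$ is an energy solution. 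Uniqueness of energy solutions on $\mathbb{T}$ under the trilinear condition \eqref{eq:trilinear-condition} (established in \cite{GP-generator}, and applicable because \eqref{betaGamma}--\eqref{lambda} translate via Lemma \ref{thm:equivalence-trilinear} into \eqref{eq:trilinear-condition}) then upgrades convergence along subsequences to convergence in law, finishing the proof.

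The main obstacle is the second order Boltzmann–Gibbs principle: in the coupled setting the local quadratic functions couple different components and different sites in an asymmetric way, so the usual one-component argument — controlling the time-integrated fluctuations of a local function by its projection onto the density field, via sharp estimates on the discrete heat kernel and the spectral/Dirichlet-form structure of the dynamics — has to be redone keeping track of the matrix structure, and, crucially here, without spectral gap estimates, relying instead on the Itô-trick / Kipnis–Varadhan-type bound and a careful multiscale (iterative) replacement as in the flat-segment approach of \cite{jaraMoreno}. Everything else (tightness, martingale CLT, identification of the noise, matching of coefficients) is comparatively routine modulo careful algebra.
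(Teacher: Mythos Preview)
Your overall architecture matches the paper exactly: martingale decomposition, Mitoma's criterion for tightness term-by-term, Kipnis--Varadhan-type control of the additive functionals, identification of each piece of the limit, and appeal to uniqueness of energy solutions on $\mathbb{T}$ from \cite{GP-generator} via the trilinear condition. The treatment of the martingale term, the symmetric term, fixed-time marginals, and the reversed process is as you describe.

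The one substantive difference is in the Boltzmann--Gibbs principle itself. You propose the multiscale iterative replacement ``as in the flat-segment approach of \cite{jaraMoreno}'', but the paper deliberately avoids this. Instead it replaces $u_{k,j}u_{k,j+1}$ by the product of a \emph{backward} and a \emph{forward} block average, $\overleftarrow{u}^{\,l}_{k,j}\,\overrightarrow{u}^{\,l}_{k,j}$, rather than the usual one-sided square $(\overrightarrow{u}^{\,l}_{k,j})^2$. The point is the factorization
\[
u_{k,j}u_{k,j+1}-\overleftarrow{u}^{\,l}_{k,j}\overrightarrow{u}^{\,l}_{k,j}
=u_{k,j}\bigl(u_{k,j+1}-\overrightarrow{u}^{\,l}_{k,j}\bigr)
+\overrightarrow{u}^{\,l}_{k,j}\bigl(u_{k,j}-\overleftarrow{u}^{\,l}_{k,j}\bigr),
\]
each piece of which is handled by a \emph{single} one-block estimate (forward and backward versions, respectively), with no iteration of scales. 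For crossed terms $u_{k,j}u_{\bar k,j}$ the analogous trick works with two forward averages since the supports already lie in different layers. This is the technical simplification the paper advertises; your route would work too, but is heavier. A minor point: Lemma~\ref{thm:equivalence-trilinear} is not where the limiting coefficients are matched---the identification in the limit is direct, term by term ($w\to\alpha_k u_k^2$, $b^l\to\beta_k^l u_ku_{k+l}$, etc.); the lemma only translates \eqref{betaGamma}--\eqref{lambda} into the trilinear condition so that uniqueness applies.
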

As usual in this setting, the central ingredient of the proof is a certain second order Boltzmann-Gibbs principle (see Theorem \ref{thm:BoltzmannGibbs} and \ref{thm:BoltzmannGibbsCrossed}). This technique originated in \cite{goncalvesJaraEnergysol} where energy solutions for the one-dimensional Burgers equation were introduced. Our approach to the proof of this result is inspired by \cite{BGGolcalvesJaraSimon}, \cite{jaraMoreno} and \cite{JaM2}, and avoids the use of spectral gap estimates. However, we adopt a slightly different though equivalent path to construct the quadratic term in the continuum which allows us to give a much simpler proof. This is briefly discussed in Remark \ref{rk:quadratic}. Convergence to coupled Burgers equations for multi-type zero range processes was obtained in the recent work \cite{bernardinFunakiSethuraman}.

\begin{remark}
Our techniques can be easily extended to the whole line $\Z$ with mainly notational modifications, showing tightness of the fluctuation field and that any limit point is an energy solution of the coupled Burgers equations. However, the uniqueness of energy solutions is not yet proved in this setting.
\end{remark}
Note that our way of writing the coupled Burgers equations in \eqref{eq:limit-Burgers} is equivalent to \eqref{csbeFunaki}
once we make the identification
\begin{equation*}
     \alpha_{k} = \Gamma_{k,k}^{k}, \quad \frac{\beta_{k}^{l}}{2} = \Gamma_{k,k+l}^{k}, \quad \gamma_{k}^{l} = \Gamma_{k-l,k-l}^{k}, \quad  \lambda_{k}^{k-l,k-l'} = \Gamma_{k-l,k-l'}^{k}.
\end{equation*}
Our assumptions on the coefficients of the model are in fact equivalent to the trilinear condition.
\begin{lemma}\label{thm:equivalence-trilinear}
The trilinear condition \eqref{eq:trilinear-condition} and conditions \eqref{betaGamma}-\eqref{lambda} are equivalent.
\end{lemma}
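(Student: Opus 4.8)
The plan is to verify the equivalence by translating both sides into a common set of relations among the $\Gamma^k_{i,j}$, using the identification
\[
\alpha_{k} = \Gamma_{k,k}^{k}, \quad \frac{\beta_{k}^{l}}{2} = \Gamma_{k,k+l}^{k}, \quad \gamma_{k}^{l} = \Gamma_{k-l,k-l}^{k}, \quad  \lambda_{k}^{k-l,k-l'} = \Gamma_{k-l,k-l'}^{k},
\]
which already appears in the excerpt. The first thing I would do is recall that, by hypothesis, $\Gamma^k_{i,j}$ is symmetric in its lower indices (this is harmless since only the symmetrized part of $\Gamma$ enters $\sum_{i,j}\Gamma^k_{i,j}\partial_x(u_iu_j)$, and it matches the structure of the quadratic terms $w,b,r,p$ in the discrete model), so the trilinear condition \eqref{eq:trilinear-condition} reduces to the single swap $\Gamma^k_{i,j} = \Gamma^i_{k,j}$ for all $i,j,k$. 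The goal is then to show this swap invariance is exactly the conjunction of \eqref{betaGamma} and \eqref{lambda}.

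Next I would break the swap relation $\Gamma^k_{i,j}=\Gamma^i_{k,j}$ into cases according to how many of the three indices $i,j,k$ coincide, since the discrete coefficients $\alpha,\beta,\gamma,\lambda$ each capture a distinct coincidence pattern. When all three are equal there is nothing to prove. When exactly two of the lower indices equal the upper index or each other, one gets relations among $\alpha$, $\beta$ and $\gamma$: writing $i=k+a$, $j=k$ one finds $\Gamma^k_{k+a,k}=\tfrac12\beta^a_k$ while $\Gamma^{k+a}_{k,k}=\gamma^{a}_{k+a}$ (taking $l=-a$, so $(k+a)-l=k$), and swap-invariance gives precisely $\beta^a_k = 2\gamma^a_{k+a}$, which is \eqref{betaGamma}; the remaining two-index-coincidence subcases reduce to this same identity or to symmetry of $\Gamma$ in its lower slots, hence are automatic. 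When all three indices are distinct, set $i=k-l$, $j=k-l'$ with $l\neq l'$ nonzero; then $\Gamma^k_{k-l,k-l'}=\lambda^{k-l,k-l'}_k$ and the two swaps $\Gamma^{k-l}_{k,k-l'}$, together with the lower-index symmetry, yield $\lambda^{k-l,k-l'}_k=\lambda^{k-l',k-l}_k=\lambda^{k,k-l'}_{k-l}$, which is exactly \eqref{lambda}. I would then note the converse direction is the same computation read backwards: assuming \eqref{betaGamma}–\eqref{lambda}, reconstruct $\Gamma^k_{i,j}$ via the identification and check case by case that the swap $\Gamma^k_{i,j}=\Gamma^i_{k,j}$ holds, the only content being exactly the two displayed relations.

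The main obstacle is purely bookkeeping: one must be careful that the index shifts in the identification (the $k-l$ versus $l$ discrepancy flagged in the remark before Proposition \ref{thm:invariant-distribution}) are handled consistently, and that the case analysis is genuinely exhaustive — in particular that the ``mixed'' subcases where one lower index equals $k$ and the other is free do not produce extra constraints beyond \eqref{betaGamma} once the lower-index symmetry of $\Gamma$ is invoked. There is no analytic difficulty; the proof is a finite, if slightly tedious, verification, and the cleanest writeup is probably to tabulate the four coincidence patterns and match each to one line of \eqref{betaGamma}–\eqref{lambda}.
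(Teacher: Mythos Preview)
Your proposal is correct and follows essentially the same route as the paper's proof in Appendix~\ref{sec:proof-trilinear}: both amount to a finite case-by-case verification via the dictionary $\alpha_k=\Gamma^k_{k,k}$, $\tfrac12\beta_k^l=\Gamma^k_{k,k+l}$, $\gamma_k^l=\Gamma^k_{k-l,k-l}$, $\lambda_k^{k-l,k-l'}=\Gamma^k_{k-l,k-l'}$, with your organization by coincidence pattern of the three indices being a slightly cleaner packaging of the paper's explicit expansion of the equations for $u_k$, $u_{k\pm a}$. One trivial slip: in your parenthetical ``taking $l=-a$, so $(k+a)-l=k$'' the correct substitution is $l=a$, though your conclusion $\Gamma^{k+a}_{k,k}=\gamma^{a}_{k+a}$ is right.
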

The proof is deferred to Appendix \ref{sec:proof-trilinear}.
In particular, this show that any system of coupled Burgers equations satisfying the trilinear condition can be obtained as the weakly asymmetric limit of suitable coupled Sasamoto-Spohn models.


\subsection{Structure of the article}


We start by providing the precise definition of energy solutions in Section \ref{sec:ES}. Then, we introduce the coupled Sasamoto-Spohn models along with some of their basic properties in Section \ref{sec:coupled-SS}. Our core estimates are proved in Section \ref{sec:Estimates}. In particular, the second order Boltzmann-Gibbs principle is proved in Section \ref{subsec:boltzmann-gibbs}. As far as we know, we provide the simplest proof of such a result in the literature.

 In Section \ref{sec:tightness}, we prove the tightness of the fluctuating field by showing the tightness of each term in the martingale decomposition given in Section \ref{sec:MartingaleDescomposition}. In Section \ref{sec:identification}, we identify the unique limit point.
Finally, the two appendices contain the proofs of the equivalence between conditions \eqref{betaGamma}-\eqref{lambda} and the trilinear condition, and a key identity in the proof of stationarity, respectively.

\subsection{General Notations}


We work on the torus $\T = \R/\Z$.
At the discrete level, we write $\Z_m = \Z/m\Z$ and $\Z_m^*=\Z_m\backslash \{0\}$.
We denote by $\mathcal{S}(\T)$ the space of Schwartz test functions on $\T$ and by $\mathcal{S}'(\T)$ the space of tempered distributions.
 For $n \geq 1$ and a smooth function $\varphi$, we define 
 $\varphi_{j}^{n} = \varphi(\frac{j}{n})$, 
 $\nabla^{n} \varphi_{j}^{n} = n (\varphi_{j+1}^{n} - \varphi_{j}^{n})$ 
 and 
 $\Delta^{n} \varphi_{j}^{n} = n^2(\varphi_{j+1}^{n} + \varphi_{j-1}^{n} - 2\varphi_{j}^{n}) $. We also define
\begin{equation*}
    \mathcal{E}(\varphi) = \int_{\T} \varphi^{2}(x)dx, \quad \mathcal{E}_{n}(\psi) = \frac{1}{n} \sum_{j \in \Z_{M}} \psi_{j}^{2},
\end{equation*}
for $\varphi \in L^{2}(\T)$ and $\psi \in l^{2}(\Z_{M})$ respectively and, with a slight abuse of notation,
\begin{equation*}
    \mathcal{E}_{n}(\varphi) = \frac{1}{n} \sum_{j \in \Z_{M}} (\varphi_{j}^{n})^{2},
\end{equation*}
for $\varphi \in L^{2}(\T)$

We denote by $\mathscr{C}$ the space of twice continuously differentiable functions from $\R^{KM}$ to $\R$ with polynomial growth of their derivatives up to order two, where we identify $\R^{KM}$ with $\R^{\Z_{K} \times \Z_{M}}$.
For a function $g: \R^{KM} \to \R$ we denote by $\supp{(g)}$ the smallest set $S \subseteq \Z_{K} \times \Z_{M}$ such that
\begin{equation*}
    g(u) = g(\Tilde{u}),
\end{equation*}
if $u_{k,j} = \Tilde{u}_{k,j}$ for all $(k,j) \in S$.

We denote by $\mathbb{P}_{n}$ the law of the process $u = (u_{1}, \dots, u_{K})$ with $\epsilon_{n} = n^{-\frac{1}{2}}$ and we let $\E_{n}$ denote expectation with respect to $\mathbb{P}_{n}$.

As usual, $C$ denotes a constant which value can change from line to line.



\section{Energy Solutions of the coupled stochastic Burgers equation}
\label{sec:ES}

The theory of energy solutions for the (one-layer) Burgers equation originated in \cite{goncalvesJaraEnergysol} and was subsequently developed in \cite{GuJ}. Uniqueness was proved in \cite{GP-uniqueness} on the whole line. The corresponding result in the multi-component setting on the one-dimensional torus was provided in \cite{GP-generator}. Below, we follow the exposition given in \cite{bernardinFunakiSethuraman}, which is equivalent (see \cite{BGGolcalvesJaraSimon} for the one-component case). 

The theory of energy solutions has been extremely successful to show convergence of stationary discrete models to the Burgers equation. See for instance \cite{goncalvesJaraEnergysol, GJS, DGP, jaraMoreno, JaM2}.
	

We recall the system of coupled stochastic Burgers equation,
\begin{equation}
\label{coupledBurgers}
        \partial_{t} u_{k} = \frac{1}{2} \partial_{x}^{2} u_{k} + \sum_{i,j=1}^{K} \Gamma_{i,j}^{k} \partial_{x} (u_{i} u_{j}) + \partial_{x} \mathscr{W}_{k}, \quad 1 \leq k \leq K,
\end{equation}
where the initial condition is taken as the product of independent space white noises on the one-dimensional torus.

We start with two definitions:
\begin{definition}
We say that a process $\{u_{t}=(u_{1,t},\cdots,u_{K,t}): \ t \in [0,T] \}$ taking values in $C([0,T],\mathcal{S}'(\mathbb{T})^K)$ satisfies condition (S) if, for all $t \in [0,T]$, the $\mathcal{S}'(\mathbb{T})$-valued random variables $\{u_{k,t}\}_{k \in \Z_{K}}$ form a family of independent white noises of variance 1.

For a process $\{u_{t}: \ t \in [0,T] $, $0 \leq s \leq t \leq T \}$ satisfying condition (S), $\varphi \in \mathcal{S}(\T)$ and $\varepsilon > 0$, we define
\begin{equation*}
    \mathcal{A}_{s,t}^{\varepsilon,(i,j)}(\varphi) = 
    \begin{cases}
        \int_{s}^{t} \int_{\T} u_{i,v}(\overrightarrow{\iota_{\varepsilon}}(x)) u_{j,v}(\overrightarrow{\iota_{\varepsilon}}(x)) \partial_{x} \varphi(x) dxdv, \quad \text{if } {i \neq j}\\
        \int_{s}^{t} \int_{\T} u_{i,v}(\overleftarrow{\iota_{\varepsilon}}(x)) u_{i,v}(\overrightarrow{\iota_{\varepsilon}}(x)) \partial_{x} \varphi(x) dxdv, \quad \text{if } {i = j}\\
    \end{cases}
\end{equation*}
where $\overleftarrow{\iota_{\varepsilon}}(x) = \frac{1}{\varepsilon} \mathds{1}_{(x-\varepsilon,x]}$ and $\overrightarrow{\iota_{\varepsilon}}(x) = \frac{1}{\varepsilon} \mathds{1}_{(x,x + \varepsilon]}$.
\end{definition}

\begin{definition}
Let $\{u_{t}: \ t \in [0,T] \}$ be a process satisfying condition (S). We say that $\{u_{t}: \ t \in [0,T] \}$ satisfies the energy estimate (EC) if there exists a constant $\kappa > 0$ such that,
for any $\varphi \in \mathcal{S}(\T)$, any $0 \leq s \leq t \leq T$ and any $0 < \delta < \varepsilon < 1$,
    \begin{equation}
    \label{conditionEC}
        \E \left[ \Big| \mathcal{A}_{s,t}^{\varepsilon,(i,j)}(\varphi) - \mathcal{A}_{s,t}^{\delta,(i,j)}(\varphi) \Big|^{2} \right] \leq \kappa (t-s) \varepsilon \mathcal{E}(\partial_{x} \varphi),
    \end{equation}
for all $i,j \in \Z_{K}$.
\end{definition}
Conditions (S) and (EC) are the key to define the quadratic term. The following corresponds to \cite[Theorem 1]{goncalvesJaraEnergysol} in the multi-component context. The proof is identical.
\begin{theorem}
\label{ThmAconvergence}
Assume that $\{u_{t}: \ t \in [0,T] \}$ satisfies (S) and \eqref{conditionEC} for some $i,j \in \Z_{K}$. Then, there exists an $\mathcal{S}'(\T)$-valued stochastic process $\{\mathcal{A}_{t}: \ t \in [0,T] \}$ with continuous paths such that
\begin{equation*}
    \mathcal{A}_{t}^{(i,j)}(\varphi) = \lim_{\varepsilon \to 0} \mathcal{A}_{0,t}^{\varepsilon,(i,j)}(\varphi),
\end{equation*}
in $L^{2}$, for any $t \in [0,T]$ and $\varphi \in \mathcal{S}(\T)$.
\end{theorem}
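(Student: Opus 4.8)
The statement to prove is Theorem \ref{ThmAconvergence}: under conditions (S) and (EC), the additive functional $\mathcal{A}_{0,t}^{\varepsilon,(i,j)}(\varphi)$ converges in $L^2$ as $\varepsilon \to 0$ to a continuous $\mathcal{S}'(\T)$-valued process.

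The plan is to show that $\varepsilon \mapsto \mathcal{A}_{0,t}^{\varepsilon,(i,j)}(\varphi)$ is Cauchy in $L^2$ along a dyadic sequence $\varepsilon = 2^{-m}$ and then promote this to full convergence using the energy estimate. Concretely, first I would fix $\varphi \in \mathcal{S}(\T)$ and $t \in [0,T]$, apply the energy estimate (EC) with $s=0$, $\varepsilon = 2^{-m}$, $\delta = 2^{-(m+1)}$, which gives
\begin{equation*}
    \E\left[ \bigl| \mathcal{A}_{0,t}^{2^{-m},(i,j)}(\varphi) - \mathcal{A}_{0,t}^{2^{-(m+1)},(i,j)}(\varphi) \bigr|^2 \right] \leq \kappa\, t\, 2^{-m} \mathcal{E}(\partial_x \varphi).
\end{equation*}
Summing the square roots over $m$ yields a convergent geometric series, so $(\mathcal{A}_{0,t}^{2^{-m},(i,j)}(\varphi))_m$ is Cauchy in $L^2$; call its limit $\mathcal{A}_t^{(i,j)}(\varphi)$. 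Then, for arbitrary $0 < \delta < \varepsilon < 1$, choosing $m$ with $2^{-m} \leq \varepsilon$ and comparing $\mathcal{A}_{0,t}^{\varepsilon}$ and $\mathcal{A}_{0,t}^{\delta}$ each to a dyadic level via (EC) and the triangle inequality shows $\mathcal{A}_{0,t}^{\varepsilon,(i,j)}(\varphi) \to \mathcal{A}_t^{(i,j)}(\varphi)$ in $L^2$ as $\varepsilon \to 0$, for every $t$ and $\varphi$.

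Next I would address the regularity in $t$ and the fact that the limit defines a genuine $\mathcal{S}'(\T)$-valued process. For fixed $\varphi$, apply (EC) with general $0 \le s \le t \le T$ and pass to the limit $\delta \to 0$ (using the $L^2$ convergence just established, together with the fact that $\mathcal{A}_{0,t}^{\varepsilon} - \mathcal{A}_{0,s}^{\varepsilon} = \mathcal{A}_{s,t}^{\varepsilon}$), to obtain $\E[|\mathcal{A}_t^{(i,j)}(\varphi) - \mathcal{A}_s^{(i,j)}(\varphi)|^2] \leq \kappa (t-s)\varepsilon\,\mathcal{E}(\partial_x\varphi)$ — wait, more carefully: taking $\varepsilon \to 0$ as well forces the increment to vanish in $L^2$ unless we keep a fixed scale; the correct route is to bound $\E[|\mathcal{A}_{s,t}^{\varepsilon,(i,j)}(\varphi)|^2]$ directly. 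One has $|\mathcal{A}_{s,t}^{\varepsilon,(i,j)}(\varphi)| \leq \int_s^t |\langle \text{smoothed product}, \partial_x\varphi\rangle|\,dv$, and since the integrand has an $L^2$-norm controlled uniformly in $v$ by the white-noise structure (condition (S)) times $\varepsilon^{-1/2}$-type quantities, a Cauchy–Schwarz in time gives $\E[|\mathcal{A}_{s,t}^{\varepsilon}(\varphi)|^2] \le C(t-s)^2 \varepsilon^{-1}$; combined with the (EC)-based telescoping from scale $1$ down to $\varepsilon$, one gets a bound of order $(t-s)^2 + (t-s)$ which, after passing to the limit in $\varepsilon$, yields $\E[|\mathcal{A}_t^{(i,j)}(\varphi) - \mathcal{A}_s^{(i,j)}(\varphi)|^2] \le C(t-s)^{3/2}$ or similar. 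Since $\mathcal{A}_t^{(i,j)}(\varphi)$ is a limit of time-integrals it is automatically a (Gaussian-chaos-valued, hence all moments finite) process, so a Kolmogorov continuity argument upgrades Hölder control of second moments to a version with continuous paths in $t$. Linearity and continuity in $\varphi$ (again via (EC), which is linear in $\varphi$ through $\partial_x\varphi$) let us invoke a standard Mitoma-type criterion to realize $\{\mathcal{A}_t^{(i,j)}\}$ as a process with values in $C([0,T],\mathcal{S}'(\T))$.

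I expect the main obstacle to be the bookkeeping in the second paragraph: getting the precise Hölder exponent in $t$ for the second moment of the increments with a constant uniform over the relevant scales, so that Kolmogorov's criterion applies. This requires carefully combining the a priori bound $\E[|\mathcal{A}_{s,t}^{\varepsilon,(i,j)}(\varphi)|^2] \lesssim (t-s)^2 \varepsilon^{-1}$ (from brute-force Cauchy–Schwarz using condition (S)) with the fine bound $\E[|\mathcal{A}_{s,t}^{\varepsilon,(i,j)}(\varphi) - \mathcal{A}_{s,t}^{\delta,(i,j)}(\varphi)|^2] \lesssim (t-s)\varepsilon$ from (EC), and optimizing over the truncation scale $\varepsilon$ as a function of $t-s$. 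Everything else — the Cauchy argument, linearity in $\varphi$, the Mitoma/Kolmogorov packaging — is routine, and since this is word-for-word the argument of \cite[Theorem 1]{goncalvesJaraEnergysol} carried out componentwise (the cross terms $i \neq j$ being handled identically, with the one-sided mollifiers $\overrightarrow{\iota_\varepsilon}$ on both factors, and the diagonal terms $i = j$ using the staggered $\overleftarrow{\iota_\varepsilon}, \overrightarrow{\iota_\varepsilon}$ pair exactly as in the one-component case), it suffices to remark that no step uses anything beyond (S) and (EC), which we have assumed.
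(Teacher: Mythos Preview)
Your proposal is correct and matches the paper's approach: the paper does not give an independent proof but simply states that the argument is identical to \cite[Theorem~1]{goncalvesJaraEnergysol} in the one-component case, which is precisely the dyadic Cauchy argument, optimization over the smoothing scale to get a $(t-s)^{3/2}$ increment bound, hypercontractivity in the second chaos, and Kolmogorov/Mitoma packaging that you outline. Your identification of the only nontrivial step---balancing the crude bound $(t-s)^2\varepsilon^{-1}$ against the (EC) bound $(t-s)\varepsilon$---is exactly the point of that reference.
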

In this way, all the components of the quadratic term are well-defined for processes satisfying (S) and (EC). We can now state the definition of energy solutions.
\begin{definition}
We say that $\{u_{t}: \ t \in [0,T] \}$ is a stationary energy solution of the coupled Burgers equations \eqref{coupledBurgers} if
\begin{itemize}
    \item $\{u_{t}: \ t \in [0,T] \}$ satisfies (S) and (EC).
    \pagebreak
    \item For all $\varphi \in \mathcal{S}(\T)$, for all $k \in \Z_{K}$, the process
    \begin{equation*}
        u_{k,t}(\varphi) - u_{k,0}(\varphi) - \frac{1}{2} \int_{0}^{t} u_{k,s}(\partial_{x}^{2} \varphi)ds - \mathcal{A}_{t}^{k}(\varphi),
    \end{equation*}
    is a martingale with quadratic variation $t \mathcal{E}(\partial_{x} \varphi)$, where
    \begin{equation*}
        \mathcal{A}_{t}^{k} = \sum_{i,j = 1}^{K} \Gamma_{i,j}^{k} \mathcal{A}_{t}^{(i,j)},
    \end{equation*}
    the processes $\mathcal{A}_{t}^{(i,j)}$ are given by Theorem \ref{ThmAconvergence}
    and $(\Gamma_{i,j}^{k})_{i,j,k}$ are the coefficients from the coupled stochastic Burgers equations \eqref{coupledBurgers}.
    \item The time-reversed process $\hat{u} = \{u_{T-t}: \ t \in [0,T] \}$ satisfies that, for each $k \in \Z_{K}$ and $\varphi \in \mathcal{S}(\T)$,
    \begin{equation*}
        \hat{u}_{k,t}(\varphi) - \hat{u}_{k,0}(\varphi) - \frac{1}{2} \int_{0}^{t} \hat{u}_{k,s}(\partial_{x}^{2} \varphi)ds + \hat{\mathcal{A}}_{t}^{k}(\varphi) 
    \end{equation*}
    is a martingale with quadratic variation $t \mathcal{E}(\partial_{x} \varphi)$ in the filtration generated by $\hat{u}$, where $\hat{\mathcal{A}}_{t} = \mathcal{A}_{T} - \mathcal{A}_{T-t}$.
\end{itemize}
\end{definition}
Existence and uniqueness of energy solutions are proved in \cite{GP-generator}.
\begin{remark}\label{rk:quadratic}
Our quadratic term $\mathcal{A}_{s,t}^{\varepsilon,(i,i)}$ is slightly different from the standard one, which is usually defined as
\begin{equation*}
    \Tilde{\mathcal{A}}_{s,t}^{\varepsilon,(i,i)} =  \int_{s}^{t} \int_{\T} u_{i,v}(\overrightarrow{\iota_{\varepsilon}}(x))^2  \partial_{x} \varphi(x) dxdv.
\end{equation*}
However, it can be showed that $ \displaystyle \lim_{\varepsilon \to 0} \Tilde{\mathcal{A}}_{t}^{\varepsilon,(i,i)} = \lim_{\varepsilon \to 0} \mathcal{A}_{t}^{\varepsilon,(i,i)}$. This can be seen, for instance, by combining our second-order Boltzmann-Gibbs principle Theorem \ref{thm:BoltzmannGibbs} with a version of the corresponding result in \cite{jaraMoreno} in the multi-component setting, and taking the weakly asymmetric limit. The required modifications of Theorem \ref{ThmAconvergence} are straightforward.
\end{remark}



\section{Coupled Sasamoto-Spohn Processes}
\label{sec:coupled-SS}


\subsection{The Generator and the invariant measure}\label{sec:invariant-measure}

Let $\mathscr{C}$ be the space of twice continuously differentiable functions from $\R^{KM}$ to $\R$ with polynomial growth of their derivatives up to order two. The following lemma follows from a simple application of It\^o's formula. 
\begin{lemma}
The generator of the dynamics (\ref{eq:SS-propio}) acts on $\mathscr{C}$ as
\begin{equation}
    L = \sum_{k \in \Z_{K}} \sum_{j \in \Z_{M}} \Big\{ \frac{1}{2} (\partial_{k,j+1} - \partial_{k,j})^{2} -\frac{1}{2} (u_{k,j+1}-u_{k,j}) (\partial_{k,j+1} - \partial_{k,j}) \\ + \epsilon B_{k,j}(u) \partial_{k,j} \Big\}
\end{equation}
\end{lemma}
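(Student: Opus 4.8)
The plan is to prove the Lemma — the statement that the generator $L$ of the diffusion system \eqref{eq:SS-propio} acts on $\mathscr{C}$ by the stated formula — by a direct application of It\^o's formula, which is standard for systems of It\^o SDEs driven by independent Brownian motions. First I would rewrite \eqref{eq:SS-propio} in canonical It\^o form, identifying the drift and diffusion coefficients of each coordinate $u_{k,j}$: the drift is $\frac12 \Delta u_{k,j} + \epsilon B_{k,j}(u) = \frac12(u_{k,j+1}+u_{k,j-1}-2u_{k,j}) + \epsilon B_{k,j}(u)$, and the noise is $d\xi_{k,j} - d\xi_{k,j-1}$, a linear combination of two independent standard Brownian motions $\xi_{k,j}$ and $\xi_{k,j-1}$.

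Next I would compute the diffusion (co)variance structure. For a test function $g \in \mathscr{C}$, It\^o's formula gives $dg(u) = \sum_{k,j} \partial_{k,j} g(u)\, du_{k,j} + \frac12 \sum_{(k,j),(k',j')} \partial_{k,j}\partial_{k',j'} g(u)\, d\langle u_{k,j}, u_{k',j'}\rangle$. Since the $\xi_{k,j}$ are independent across both indices, the only nonzero brackets are, for fixed $k$, $d\langle u_{k,j}, u_{k,j}\rangle = 2\,dt$ (coming from $d\xi_{k,j}$ and $d\xi_{k,j-1}$), and $d\langle u_{k,j}, u_{k,j+1}\rangle = -dt$ (the shared Brownian motion $\xi_{k,j}$ appears with a $+$ sign in $u_{k,j}$ via $-d\xi_{k,j-1}$... let me be careful: $u_{k,j}$ has noise $d\xi_{k,j}-d\xi_{k,j-1}$ and $u_{k,j+1}$ has noise $d\xi_{k,j+1}-d\xi_{k,j}$, so they share $-\xi_{k,j}$ and $\xi_{k,j}$ respectively, contributing $-dt$). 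Collecting, the second-order part is $\sum_{k}\sum_j \bigl\{ \partial_{k,j}^2 g - \partial_{k,j}\partial_{k,j+1} g \bigr\}$, which is exactly $\tfrac12\sum_{k}\sum_j (\partial_{k,j+1}-\partial_{k,j})^2 g$ after symmetrizing the cross terms over $j$ and using periodicity in $\Z_M$. The first-order (drift) part gives $\sum_{k,j}\partial_{k,j} g \cdot \bigl(\tfrac12 \Delta u_{k,j} + \epsilon B_{k,j}(u)\bigr)$; a summation by parts in $j$ (again using periodicity on $\Z_M$) converts $\sum_j \partial_{k,j} g \cdot \tfrac12 \Delta u_{k,j}$ into $-\tfrac12 \sum_j (u_{k,j+1}-u_{k,j})(\partial_{k,j+1}-\partial_{k,j}) g$, matching the middle term of the claimed formula.

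The polynomial-growth hypothesis on $\mathscr{C}$ together with the fact that the diffusion processes $u_{k,j}$ have moments of all orders (being Gaussian-plus-bounded-perturbation, or more simply by a Gronwall/localization argument on the SDE) is what is needed to justify that the It\^o correction terms are genuinely in the domain of the generator and that the local martingale part is a true martingale; I would invoke this briefly rather than belabor it. The only genuinely careful point — the rest being bookkeeping — is tracking the signs and the index shifts in the bracket computation and in the two summations by parts, so that the discrete Laplacian on test functions and the squared discrete gradient $(\partial_{k,j+1}-\partial_{k,j})^2$ emerge with the correct coefficients; this is entirely routine but is where an error would most easily creep in. I therefore expect no real obstacle, and would present the argument as a short verification.
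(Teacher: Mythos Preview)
Your proposal is correct and follows exactly the approach indicated in the paper, which simply states that the lemma ``follows from a simple application of It\^o's formula'' without giving further details. Your computation of the quadratic covariation structure and the two summations by parts over $\Z_M$ are precisely the bookkeeping steps needed, and there is nothing more to it.
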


We recall that $\mu_{K,M} = (\rho^{\otimes \Z_{K}})^{\otimes \Z_{M}}$, where $d\rho(x) = \frac{1}{\sqrt{2 \pi}} e^{-\frac{x^{2}}{2}}dx$.
It is a standard fact that we have the integration-by-parts formula
\begin{eqnarray*}
	\int_{\R} x f(x) d\rho(x)
	=
	\int_{\R} \partial_{x} f(x) d\rho(x),
\end{eqnarray*}
for all $f\in C^1(\R)$, such that $f$ and $\partial_x f$ grow polynomially.
We also recall that the coefficients of the model \eqref{eq:SS-propio} are assumed to satisfy conditions \eqref{betaGamma}-\eqref{lambda}, namely,
\begin{align*}
    \beta_{k}^{a} = 2 \gamma_{k+a}^{a},
    \quad
    \lambda_{k}^{k-a,k-a'} = \lambda_{k}^{k-a',k-a} = \lambda_{k-a}^{k,k-a'}
\end{align*}
for all $k,a,a' \in \Z_{K}$.
\begin{lemma}
\label{adjointOfL}
Assume \eqref{betaGamma}-\eqref{lambda}.
The adjoint of the generator $L$ in $L^{2}(\mu_{K,M})$ is given by
\begin{equation*}
    L^{*} =  \sum_{k \in \Z_{K}} \sum_{j \in \Z_{M}} \Big\{ \frac{1}{2}(\partial_{k,j+1} - \partial_{k,j})^{2} - \frac{1}{2}(u_{k,j+1} - u_{k,j})(\partial_{k,j+1} - \partial_{k,j}) - \epsilon B_{k,j}(u) \partial_{k,j} \Big\}.
\end{equation*}
\end{lemma}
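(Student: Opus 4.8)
The plan is to compute the adjoint $L^*$ in $L^2(\mu_{K,M})$ directly, using the Gaussian integration-by-parts formula, and to show that the antisymmetric (first-order, $\epsilon$-dependent) part of $L$ changes sign while the symmetric part is left invariant; the conditions \eqref{betaGamma}-\eqref{lambda} are exactly what is needed to make the drift term antisymmetric. Write $L = S + \epsilon A$, where
\begin{equation*}
    S = \sum_{k,j} \Big\{ \tfrac12 (\partial_{k,j+1} - \partial_{k,j})^2 - \tfrac12 (u_{k,j+1} - u_{k,j})(\partial_{k,j+1} - \partial_{k,j}) \Big\},
    \qquad
    A = \sum_{k,j} B_{k,j}(u)\, \partial_{k,j}.
\end{equation*}
The operator $S$ is (minus) the Ornstein-Uhlenbeck-type generator associated to the linear part of \eqref{eq:SS-propio}; it is well known — and can be checked termwise by one Gaussian integration by parts in each pair of variables $(u_{k,j},u_{k,j+1})$ — that $S$ is self-adjoint in $L^2(\mu_{K,M})$. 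So the only real content is to prove that $A^* = -A$ in $L^2(\mu_{K,M})$, i.e. that $\int (Af)\,g\,d\mu_{K,M} = -\int f\,(Ag)\,d\mu_{K,M}$ for $f,g\in\mathscr C$.

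First I would expand $\int (Af) g\, d\mu = \sum_{k,j}\int B_{k,j}(u)\,(\partial_{k,j} f)\,g\, d\rho_{K,M}$ and integrate by parts in the variable $u_{k,j}$, using $\int x h(x)\,d\rho = \int \partial_x h(x)\, d\rho$. This produces $-\sum_{k,j}\int f\, \partial_{k,j}\!\big( B_{k,j}(u)\, g \big)\, d\mu = -\int f\,(Ag)\,d\mu - \sum_{k,j}\int f\, g\, (\partial_{k,j} B_{k,j})\, d\mu$. Hence $A^*=-A$ precisely when the scalar function $\sum_{k,j}\partial_{k,j} B_{k,j}(u)$ vanishes identically. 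Since $B_{k,j} = G_{k,j} - G_{k,j-1}$, summing over $j$ telescopes and $\sum_{k,j}\partial_{k,j}B_{k,j} = \sum_{k,j}(\partial_{k,j}-\partial_{k,j+1})G_{k,j}$. So the lemma reduces to the identity
\begin{equation*}
    \sum_{k\in\Z_K}\sum_{j\in\Z_M} \big(\partial_{k,j} - \partial_{k,j+1}\big) G_{k,j}(u) = 0 .
\end{equation*}

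The main obstacle — and the only genuine computation — is verifying this last identity, and this is exactly where hypotheses \eqref{betaGamma} and \eqref{lambda} enter. I would treat the four families of terms in $G_{k,j}$ separately. For the diagonal pieces $\alpha_k w_{k,j}$ one computes $(\partial_{k,j}-\partial_{k,j+1})w_{k,j}$ and sums over $j$; the telescoping structure plus the cyclic symmetry of $w$ kills it. For the $b^l$ terms together with the $r^l$ terms one uses the relation $\beta_k^a = 2\gamma_{k+a}^a$: writing out $\sum_{k,j}(\partial_{k,j}-\partial_{k,j+1})\big(\sum_l \beta_k^l b_{k,j}^l + \sum_l \gamma_k^l r_{k,j}^l\big)$, after re-indexing the sum over $k$ (replacing $k$ by $k+l$ in one family) the coefficient condition makes the two contributions cancel. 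Finally, for the crossed terms $\lambda_k^{k-l,k-l'}p_{k,j}^{l,l'}$ one uses the full symmetry $\lambda_k^{k-a,k-a'}=\lambda_k^{k-a',k-a}=\lambda_{k-a}^{k,k-a'}$: the symmetry in the two lower indices pairs up the $(l,l')$ and $(l',l)$ terms, and the third equality, after relabelling the $k$-sum, cancels what remains against the derivative acting on the other factor. In each case the computation is a bookkeeping exercise in telescoping over $j\in\Z_M$ and re-indexing over $k\in\Z_K$; the coefficient hypotheses are used exactly once per family. Collecting the four cancellations gives $\sum_{k,j}\partial_{k,j}B_{k,j}\equiv 0$, hence $A^*=-A$, hence $L^* = S - \epsilon A$, which is the claimed formula. (As a byproduct this is precisely the input needed for Proposition \ref{thm:invariant-distribution}, since $L^*\mathbf 1 = 0$ is equivalent to invariance of $\mu_{K,M}$; I expect the paper to have isolated this identity as the ``key identity'' relegated to the appendix.)
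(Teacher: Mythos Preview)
There is a genuine gap in your treatment of the drift part $A$. Your Gaussian integration by parts is incorrect: under $\mu_{K,M}$ the adjoint of $\partial_{k,j}$ is $-\partial_{k,j}+u_{k,j}$, not $-\partial_{k,j}$. Concretely, the formula $\int \partial_x h\,d\rho=\int xh\,d\rho$ that you quote gives, applied to $h=fB_{k,j}g$,
\[
\int (\partial_{k,j}f)\,B_{k,j}g\,d\mu_{K,M}
= -\int f\,\partial_{k,j}(B_{k,j}g)\,d\mu_{K,M}
\;+\;\int u_{k,j}\,f\,B_{k,j}g\,d\mu_{K,M}.
\]
You dropped the second term. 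Consequently the correct requirement for $A^*=-A$ is not $\sum_{k,j}\partial_{k,j}B_{k,j}=0$ but
\[
\sum_{k\in\Z_K}\sum_{j\in\Z_M}\bigl(u_{k,j}B_{k,j}(u)-\partial_{k,j}B_{k,j}(u)\bigr)=0,
\]
which is exactly Lemma~\ref{ujBj-djBj=0} in the paper.

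This matters because the two halves of that identity behave very differently. The piece $\sum_{k,j}\partial_{k,j}B_{k,j}=0$ is essentially trivial and does \emph{not} use the coefficient hypotheses: for the $b^l$, $r^l$ and $p^{l,l'}$ contributions one has $\partial_{k,j}(b_{k,j}^l-b_{k,j-1}^l)=0$, $\partial_{k,j}(r_{k,j}^l-r_{k,j-1}^l)=0$, $\partial_{k,j}(p_{k,j}^{l,l'}-p_{k,j-1}^{l,l'})=0$ termwise, and the $w$ contribution telescopes over $j$. The nontrivial cancellation --- the one that genuinely requires \eqref{betaGamma} and \eqref{lambda} --- is $\sum_{k,j}u_{k,j}B_{k,j}(u)=0$. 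Your sketch of ``four cancellations'' (pairing $b$ with $r$ via $\beta_k^a=2\gamma_{k+a}^a$, and using the $\lambda$ symmetry for $p$) is in fact the computation needed for \emph{this} sum, not for $\sum\partial_{k,j}B_{k,j}$. So your outline has the right combinatorics in mind but attaches it to the wrong identity; once you restore the missing $u_{k,j}$ term in the integration by parts, the argument lines up with the paper's Lemma~\ref{ujBj-djBj=0}, and the rest of your decomposition $L=S+\epsilon A$, $S^*=S$, $A^*=-A$ goes through.
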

\begin{proof}
First, we compute the adjoint of $\partial_{k,j}$ in $L^{2}(\mu_{K,M})$. Let $f$ and $g$ be twice differentiable. Then,
\begin{align*}
    \int_{\R^{KM}} \partial_{k,j} f(u) g(u) d\mu_{K,M}(u) 
    & = \int_{\R^{KM}} \partial_{k,j}f(u) \Big\{ g(u)\rho_{K,M}(u)  \Big\} du \\
    & =  - \int_{\R^{KM}} f(u) \partial_{k,j} (g(u) \rho_{K,M}(u)) du \\
    & = \int_{\R^{KM}} f(u) \Big\{ (-\partial_{k,j} + u_{k,j}) g(u) \Big\} d\mu_{K,M}(u).
\end{align*}
We conclude that $\partial_{k,j}^{*} = -\partial_{k,j} + u_{k,j} $. Hence,
\begin{align*}
    ((\partial_{k,j+1} - \partial_{k,j})^{2})^{*} &= ((\partial_{k,j+1} - \partial_{k,j})^{*})^{2} \\
    & = (\partial_{k,j} - \partial_{k,j+1} + u_{k,j+1} - u_{k,j})^{2} \\
    & = (\partial_{k,j+1} - \partial_{k,j})^{2} + (\partial_{k,j} - \partial_{k,j+1})(u_{k,j+1} - u_{k,j}) \\
    & \quad - (u_{k,j+1} - u_{k,j})(\partial_{k,j+1} - \partial_{k,j}) + (u_{k,j+1} - u_{k,j})^{2}.
\end{align*}
On the other hand,
\begin{align*}
    (\partial_{k,j} - \partial_{k,j+1})((u_{k,j+1} - u_{k,j})f) & = -2f + (u_{k,j+1} - u_{k,j})(\partial_{k,j} - \partial_{k,j+1})f.
\end{align*}
Combining the last two displays, we conclude that
\begin{equation*}
    ((\partial_{k,j+1} - \partial_{k,j})^{2})^{*} = (\partial_{k,j+1} - \partial_{k,j})^{2} - 2(u_{k,j+1} - u_{k,j})(\partial_{k,j+1} - \partial_{k,j}) + (u_{k,j+1} - u_{k,j})^{2} - 2.
\end{equation*}
Next,
\begin{align*}
    ((u_{k,j+1} - u_{k,j})(\partial_{k,j+1} - \partial_{k,j}))^{*} & = (\partial_{k,j+1} - \partial_{k,j})^{*}(u_{k,j+1} - u_{k,j})^{*} \\
    & = (\partial_{k,j} - \partial_{k,j+1} + u_{k,j+1} - u_{k,j})(u_{k,j+1} - u_{k,j}) \\
    & = -2 + (u_{k,j+1} - u_{k,j})(\partial_{k,j} - \partial_{k,j+1}) + (u_{k,j+1} - u_{k,j})^{2}.
\end{align*}
Finally,
\begin{align*}
    (B_{k,j}(u) \partial_{k,j})^{*}f &= \partial_{k,j}^{*} (B_{k,j}(u))f \\
    & = (-\partial_{k,j} + u_{k,j}) B_{k,j}(u) f \\
    & = -(\partial_{k,j}B_{k,j}(u)) f - B_{k,j}(u) \partial_{k,j} f + u_{k,j} B_{k,j}(u) f.
\end{align*}
Putting the above computations together, we obtain
\begin{align*}
    L^{*} 
    & = \sum_{k \in \Z_{K}} \sum_{j \in \Z_{M}} \Big\{ \frac{1}{2}(\partial_{k,j+1} - \partial_{k,j})^{2} - \frac{1}{2}(u_{k,j+1} - u_{k,j})(\partial_{k,j+1} - \partial_{k,j}) - \epsilon B_{k,j}(u) \partial_{k,j}  \\
    & \phantom{blablablablabla}
    + \epsilon (-\partial_{k,j}B_{k,j}(u) + u_{k,j} B_{k,j}(u)) \Big\}.
\end{align*}
We conclude the proof by noticing that
\begin{equation*}
    \sum_{k \in \Z_{K}} \sum_{j \in \Z_{M}} \epsilon (-\partial_{k,j}B_{k,j}(u) + u_{k,j} B_{k,j}(u)) = 0,
\end{equation*}
which is the object of Lemma \ref{ujBj-djBj=0} below.
\end{proof}
The next lemma summarizes a key identity in the proof above and will be used once again in the proof of stationarity of the Gaussian distribution.
\begin{lemma}
\label{ujBj-djBj=0}
Let $B_{k,j}$ be defined as in Section \ref{ssmodel} and assume \eqref{betaGamma}-\eqref{lambda}. Then,
\begin{equation*}
    \sum_{k \in \Z_{K}} \sum_{j \in \Z_{M}} (u_{k,j} B_{k,j}(u) -\partial_{k,j}B_{k,j}(u)) = 0.
\end{equation*}
\end{lemma}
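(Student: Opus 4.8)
The plan is to expand the sum $\sum_{k,j}(u_{k,j}B_{k,j}(u)-\partial_{k,j}B_{k,j}(u))$ term by term according to the decomposition $G_{k,j}=\alpha_k w_{k,j}+\sum_l\beta_k^l b_{k,j}^l+\sum_l\gamma_k^l r_{k,j}^l+\sum_{l,l'}\lambda_k^{k-l,k-l'}p_{k,j}^{l,l'}$ and $B_{k,j}=G_{k,j}-G_{k,j-1}$, and show that each of the four families of contributions cancels (possibly only after grouping several families together, as the $\beta$ and $\gamma$ terms are tied by \eqref{betaGamma}). First I would record the elementary observation that, after summing over $j\in\Z_M$ (a cyclic sum), $\sum_j u_{k,j}B_{k,j}=\sum_j u_{k,j}(G_{k,j}-G_{k,j-1})=\sum_j(u_{k,j}-u_{k,j+1})G_{k,j}=-\sum_j(\nabla u)_{k,j}G_{k,j}$ where $(\nabla u)_{k,j}:=u_{k,j+1}-u_{k,j}$; similarly $\partial_{k,j}B_{k,j}=\partial_{k,j}G_{k,j}-\partial_{k,j}G_{k,j-1}$, and $\partial_{k,j}G_{k,j-1}$ only picks up the dependence of $w_{k,j-1},b_{k,j-1}^\cdot$, etc. on $u_{k,j}$. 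So the identity reduces to showing
\begin{equation*}
  \sum_{k\in\Z_K}\sum_{j\in\Z_M}\Big\{ (u_{k,j+1}-u_{k,j})\,G_{k,j} + \partial_{k,j}G_{k,j} - \partial_{k,j}G_{k,j-1}\Big\} = 0 .
\end{equation*}

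Next I would treat the four pieces. For the diagonal piece $\alpha_k w_{k,j}$ with $w_{k,j}=\tfrac13(u_{k,j}^2+u_{k,j}u_{k,j+1}+u_{k,j+1}^2)$, the computation is exactly the one-component Sasamoto-Spohn identity: one checks that $(u_{k,j+1}-u_{k,j})w_{k,j}=\tfrac13(u_{k,j+1}^3-u_{k,j}^3)$ telescopes to zero under $\sum_j$, and that the $\partial$-terms $\partial_{k,j}(w_{k,j}+w_{k,j-1})$ also telescope/cancel after summing in $j$; this requires no condition on the coefficients beyond $\alpha_k$ being a constant in $j$. For the mixed families I would compute, for each fixed $k$, the $u$-derivatives $\partial_{k,j}b_{k,j}^l$, $\partial_{k,j}r_{k,j}^l$, $\partial_{k,j}p_{k,j}^{l,l'}$ — noting carefully that $r^l$ and $p^{l,l'}$ do not depend on $u_{k,\cdot}$ but on $u_{k-l,\cdot}$, so that $\partial_{k,j}$ of those terms vanishes while the ``$u_{k,j+1}-u_{k,j}$'' factor multiplies them directly, whereas $b^l$ depends on both $u_{k,\cdot}$ and $u_{k+l,\cdot}$. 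The key algebraic point is a reindexing: a term of the form $(\nabla u)_{k,j}\,u_{k-l,j}u_{k-l,j+1}$ coming from $\gamma_k^l r_{k,j}^l$ should be matched, after the substitution $k\mapsto k+l$ (so $k-l\mapsto k$), against a term coming from the $\beta$-family $\beta_{k'}^{l'} b_{k',j}^{l'}$, and the relation $\beta_k^a=2\gamma_{k+a}^a$ is precisely what makes the coefficients line up. Likewise the cross terms $(\nabla u)_{k,j}u_{k-l,j}u_{k-l',j}$ from $\lambda_k^{k-l,k-l'}p_{k,j}^{l,l'}$ and the $\partial$-derivatives of $p$ cancel among themselves once one uses the symmetry/cocycle relations $\lambda_k^{k-a,k-a'}=\lambda_k^{k-a',k-a}=\lambda_{k-a}^{k,k-a'}$ to permute the three indices $k,k-a,k-a'$ freely.

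Concretely, after expanding everything as a polynomial in the $u$'s, every surviving monomial is either a perfect discrete gradient in $j$ (hence vanishes under $\sum_{j\in\Z_M}$) or appears in a triple $\{(k,i_1,i_2)\}$ of monomials indexed by an unordered choice of components together with a pair of neighbouring lattice sites, with coefficients $\alpha,\beta,\gamma,\lambda$ attached; the content of conditions \eqref{betaGamma}-\eqref{lambda} is exactly that the sum of the coefficients in each such triple is zero. I would organize the bookkeeping by splitting monomials according to how many distinct component-indices they involve (one index: handled by the $w$-computation; two indices: $b,r$ and the ``degenerate'' $p$ with $l=l'$ — excluded here — balanced via \eqref{betaGamma}; three indices: $p$ with $l\neq l'$, balanced via \eqref{lambda}), and within each class perform the shift of the outer summation index $k$ that brings the monomials to a common form. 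The main obstacle is purely combinatorial rather than conceptual: keeping track of which variable each $\partial_{k,j}$ actually differentiates (the $b,r,p$ terms inside $G_{k,j}$ involve components other than $k$, so the ``obvious'' cancellations are not term-by-term but only after the reindexing), and making sure the factor-of-$2$ bookkeeping in $w$ ($\tfrac13$), $b$ ($\tfrac12$) and $p$ ($\tfrac16$, with the asymmetric weights $2,1,1,2$) is exactly compensated by the $2$ in $\beta_k^a=2\gamma_{k+a}^a$. I expect the cleanest writeup to isolate, as a preliminary sublemma, the identity $\sum_{j}(u_{k,j+1}-u_{k,j})\phi_{k,j}+\sum_j\partial_{k,j}(\phi_{k,j}+\phi_{k,j-1})=\sum_j\partial_{k,j}(\text{something})$-type telescoping for a generic nearest-neighbour quadratic $\phi$, so that the remaining work is a finite check of coefficient sums; the details of this are exactly what is deferred to the appendix referenced in the text.
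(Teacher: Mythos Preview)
Your plan is correct and follows essentially the same route as the paper: both arguments separate the contributions of $w$, $b$, $r$, $p$, reduce each to a telescoping sum in $j$ (for the single-component piece) or a cancellation after reindexing $k\mapsto k\pm l$ using $\beta_k^a=2\gamma_{k+a}^a$ and the $\lambda$-symmetry (for the mixed pieces). Your summation-by-parts rewriting and the difference-of-cubes observation $(u_{k,j+1}-u_{k,j})w_{k,j}=\tfrac13(u_{k,j+1}^3-u_{k,j}^3)$ are mildly slicker than the paper's bookkeeping, but the content is the same; note only the slip where you wrote $\partial_{k,j}(w_{k,j}+w_{k,j-1})$ instead of $\partial_{k,j}(w_{k,j}-w_{k,j-1})$.
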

The proof is deferred to Appendix \ref{sec:summation-lemma}.

Now, we introduce the operators
\begin{eqnarray}
	\label{symmetricOfL}
	S 
	&=& 
	\frac{L + L^{*}}{2} = \sum_{k \in \Z_{K}} \sum_{j \in \Z_{M}} \frac{1}{2} (\partial_{k,j+1} - \partial_{k,j})^{2} -\frac{1}{2} (u_{k,j+1}-u_{k,j}) (\partial_{k,j+1} - \partial_{k,j}),
	\\
	\label{antisymmetricOfL}
	A 
	&=& 
	\frac{L - L^{*}}{2} = \sum_{k \in \Z_{K}} \sum_{j \in \Z_{M}} \epsilon B_{k,j}(u) \partial_{k,j}
\end{eqnarray}
which correspond to the symmetric and anti-symmetric parts of $L$ with respect to $\mu_{K,M}$. Their adjoints are simply given by
\begin{equation*}
	S^{*} = \frac{L^{*} + L}{2} = S,
	\quad
	A^{*} = \frac{L^{*} - L}{2} = -A.
\end{equation*}
We can now prove Proposition \ref{thm:invariant-distribution}. We formulate it once again for the convenience of the reader.
\begin{proposition}
\label{PropInvariantMeasure}
Assume \eqref{betaGamma}-\eqref{lambda}.
The measure $\mu_{K,M}$ is invariant for the dynamics (\ref{eq:SS-propio}).
\end{proposition}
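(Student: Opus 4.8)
The plan is to show that $\mu_{K,M}$ is invariant by verifying that $L^*\mathbf{1}=0$, or equivalently that $\int_{\R^{KM}} Lf \, d\mu_{K,M} = 0$ for all $f\in\mathscr{C}$. Since we have already decomposed $L = S + A$ with $S$ self-adjoint and $A$ anti-self-adjoint in $L^2(\mu_{K,M})$ (as established in the discussion following Lemma \ref{adjointOfL}), it suffices to treat the two pieces separately. For the symmetric part, $\int S f\, d\mu_{K,M} = \int f \, S^*\mathbf{1}\, d\mu_{K,M} = \int f\, S\mathbf{1}\, d\mu_{K,M}$, and $S\mathbf{1}=0$ because every term of $S$ contains at least one derivative acting to the right; more directly, $S$ is the generator of an Ornstein-Uhlenbeck-type dynamics (the discrete stochastic heat equation with the correct drift) for which the standard Gaussian product measure is reversible, so $\int S f\, d\mu_{K,M}=0$ is immediate. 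Alternatively one simply notes $S\mathbf 1 = 0$ and $S^* = S$, hence $\int Sf\,d\mu_{K,M} = \int (S\mathbf 1) f\, d\mu_{K,M} = 0$.

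The substantive point is the anti-symmetric part $A = \sum_{k,j}\epsilon B_{k,j}(u)\partial_{k,j}$. Here $A^* = -A$, so $\int A f\, d\mu_{K,M} = \int f\,(A^*\mathbf 1)\,d\mu_{K,M} = -\int f\,(A\mathbf 1)\,d\mu_{K,M}$; but we must be careful because $A\mathbf 1 = 0$ only formally — the adjoint computation in Lemma \ref{adjointOfL} shows that $(B_{k,j}\partial_{k,j})^* \mathbf 1 = -\partial_{k,j}B_{k,j} + u_{k,j}B_{k,j}$, so $A^*\mathbf 1 = \epsilon\sum_{k,j}\big(u_{k,j}B_{k,j}(u) - \partial_{k,j}B_{k,j}(u)\big)$. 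Thus invariance of $\mu_{K,M}$ reduces exactly to the vanishing of this sum, which is precisely the content of Lemma \ref{ujBj-djBj=0}. So the proof of the Proposition is short: write $L = S+A$, handle $S$ using $S^*=S$ and $S\mathbf 1 = 0$, handle $A$ using the adjoint formula from Lemma \ref{adjointOfL} together with Lemma \ref{ujBj-djBj=0}, and conclude $\int Lf\,d\mu_{K,M}=0$ for all $f\in\mathscr C$, which gives invariance.

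The only genuine obstacle is hidden in Lemma \ref{ujBj-djBj=0}, whose proof is deferred to the appendix — this is where assumptions \eqref{betaGamma} and \eqref{lambda} are actually used. The mechanism is that $u_{k,j}B_{k,j} = u_{k,j}(G_{k,j} - G_{k,j-1})$, and after summing over $j$ one can reindex to rewrite $\sum_j u_{k,j}B_{k,j}$ in terms of $\sum_j (u_{k,j}-u_{k,j+1})G_{k,j}$ (a discrete summation by parts on the torus $\Z_M$, with no boundary terms); similarly $\sum_j \partial_{k,j}B_{k,j}$ telescopes. The remaining identity is then an algebraic identity among the quadratic monomials $w$, $b^l$, $r^l$, $p^{l,l'}$ after summing over $k\in\Z_K$: the cross terms coming from $u_{k,j}G_{k,j}$ must cancel against the constant terms $\partial_{k,j}G_{k,j}$ produced by differentiating the squares, and the various contributions of $\beta_k^a b^a$, $\gamma_k^a r^a$, $\lambda_k^{k-a,k-a'}p^{a,a'}$ reorganize into each other under the cyclic relabelling $k \mapsto k+a$ precisely because of \eqref{betaGamma} and \eqref{lambda}. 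I would verify this monomial by monomial, grouping the $u_{k-l}^2$-type, $u_k u_{k+l}$-type, and $u_{k-l}u_{k-l'}$-type contributions, and checking that the coefficient of each vanishes after using the symmetry relations — but since the paper defers this to Appendix \ref{sec:summation-lemma}, for the Proposition itself one simply cites it.

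Concretely, the proof I would write is:

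\begin{proof}
By Lemma \ref{adjointOfL} and the definitions \eqref{symmetricOfL}--\eqref{antisymmetricOfL}, we have $L = S + A$ with $S^* = S$ and $A^* = -A$ in $L^2(\mu_{K,M})$. It suffices to show that $\int_{\R^{KM}} L f \, d\mu_{K,M} = 0$ for every $f \in \mathscr{C}$. On the one hand,
\begin{equation*}
    \int_{\R^{KM}} S f \, d\mu_{K,M} = \int_{\R^{KM}} f \, (S^* \mathds{1}) \, d\mu_{K,M} = \int_{\R^{KM}} f \, (S \mathds{1}) \, d\mu_{K,M} = 0,
\end{equation*}
since every summand defining $S$ in \eqref{symmetricOfL} contains a factor $\partial_{k,j+1} - \partial_{k,j}$ acting on $\mathds{1}$, hence $S\mathds{1} = 0$. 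On the other hand, using the formula $\partial_{k,j}^* = -\partial_{k,j} + u_{k,j}$ established in the proof of Lemma \ref{adjointOfL},
\begin{equation*}
    \int_{\R^{KM}} A f \, d\mu_{K,M} = \int_{\R^{KM}} f \, (A^* \mathds{1}) \, d\mu_{K,M} = \epsilon \int_{\R^{KM}} f \sum_{k \in \Z_K} \sum_{j \in \Z_M} \big( u_{k,j} B_{k,j}(u) - \partial_{k,j} B_{k,j}(u) \big) \, d\mu_{K,M}.
\end{equation*}
By Lemma \ref{ujBj-djBj=0}, the sum inside the integral vanishes identically, so $\int_{\R^{KM}} A f \, d\mu_{K,M} = 0$. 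Adding the two contributions gives $\int_{\R^{KM}} L f \, d\mu_{K,M} = 0$ for all $f \in \mathscr{C}$, which is the desired invariance of $\mu_{K,M}$.
\end{proof}
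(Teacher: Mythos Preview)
Your proposal is correct and follows essentially the same approach as the paper: split $L=S+A$, show $\int Sf\,d\mu_{K,M}=0$ and $\int Af\,d\mu_{K,M}=0$ separately, and reduce the latter to Lemma \ref{ujBj-djBj=0}. Two minor remarks: your treatment of the symmetric part is slightly slicker than the paper's (you use $S^*=S$ and $S\mathds{1}=0$ directly, whereas the paper computes the Lebesgue adjoint $S^\dagger$ and checks $S^\dagger\rho_{K,M}=0$); on the other hand, the paper explicitly cites Echeverr\'ia's criterion to pass from $\int Lf\,d\mu_{K,M}=0$ for all $f\in\mathscr{C}$ to invariance, which you should also include rather than asserting ``which is the desired invariance'' at the end.
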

\begin{proof}
The lemma follows from Echeverría's criterion (\cite{echeverria}, Theorem 4.9.17) once we show that
\begin{equation*}
    \int_{\R^{KM}} Lf(u) d\mu_{K,M}(u) = 0,
\end{equation*}
for all $f \in \mathscr{C}$. We will prove this for the symmetric and anti-symmetric parts of $L$ separately.
First,
\begin{eqnarray*}
	\int_{\R^{KM}} Sf(u) d\mu_{K,M}(u) 
	= \int_{\R^{KM}} Sf(u) \rho_{K,M}(u) du
    = \int_{\R^{KM}} f(u) S^{\dagger}\rho_{K,M}(u) du,
\end{eqnarray*}
where $S^{\dagger}$ is the adjoint of $S$ with respect to the Lebesgue measure on $\R^{KM}$. Using standard integration-by-parts, one can easily show that
\begin{equation*}
        S^{\dagger} = \frac{1}{2} \sum_{k \in \Z_{K}} \sum_{j \in \Z_{M}} (\partial_{k,j+1} - \partial_{k,j})^{2} + (u_{k,j+1}-u_{k,j}) (\partial_{k,j+1} - \partial_{k,j}) + 2.
\end{equation*}
A tedious but rather straightforward computation then shows that $S^{\dagger}\rho_{K,M}=0$. As a result,
\begin{equation*}
    \int_{\R^{KM}} Sf(u) d\mu_{K,M}(u) = 0,
\end{equation*}
for all $f \in \mathscr{C}$.

We now prove the corresponding identity for $A$. This time, it will not hold that $A^{\dagger} \rho_{K,M}=0$ in general and the argument relies heavily on the explicit structure of the non-linear term. Using standard integration-by-parts,
\begin{eqnarray*}
	\int_{\R^{KM}} Af d\mu_{K,M} 
	&=& 
	\epsilon
	\int_{\R^{KM}} \sum_{k \in \Z_{K}} \sum_{j \in \Z_{M}}  B_{k,j}(u) \partial_{k,j} f(u) d\mu_{K,M}(u) 
	\\
    	&=& 
    	\epsilon
    	\sum_{k \in \Z_{K}} \sum_{j \in \Z_{M}} \int_{\R^{KM}} B_{k,j}(u) \rho_{K,M}(u)  \partial_{k,j} f(u) du
    	 \\
    &=& 
    -\epsilon
    \sum_{k \in \Z_{K}} \sum_{j \in \Z_{M}}\int_{\R^{KM}} f(u) \partial_{k,j} \left\{ B_{k,j}(u) \rho_{K,M}(u) \right\} du
    \\
    &=&
    -\epsilon
    \sum_{k \in \Z_{K}} \sum_{j \in \Z_{M}}\int_{\R^{KM}} f(u) 
    		\left\{
    			\partial_{k,j}B_{k,j}(u) - u_{k,j}B_{k,j}(u)
    		\right\}
    	\rho_{K,M}(u) du.
\end{eqnarray*}
An application of Lemma \ref{ujBj-djBj=0} then shows that
\begin{eqnarray*}
	\int_{\R^{KM}} Af(u) d\mu_{K,M}(u) = 0,
\end{eqnarray*}
for all $f \in \mathscr{C}$.
\end{proof}


\subsection{The Martingale Decomposition}
\label{sec:MartingaleDescomposition}


Let $\varphi \in \mathcal{S}(\T)$ be a test function. Remember that the fluctuation field is given by
\begin{equation*}
    \mathcal{X}_{k,t}^{n}(\varphi) = \frac{1}{n^{\frac{1}{2}}} \sum_{j \in \Z_{n}} u_{k,j}(tn^2) \varphi_{j}^{n}, \quad k \in \Z_{K},
\end{equation*}
where it is understood that $u$ corresponds to the solution of \eqref{eq:SS-propio} with $\epsilon=n^{-\frac{1}{2}}$.
We decompose $\mathcal{X}_{k,t}^{n}(\varphi)$ into  its symmetric, anti-symmetric and martingale parts defined as
\begin{eqnarray*}
	\mathcal{S}_{k,t}^{n}(\varphi) 
	&=& 
	\int^t_0
	n^2 S \mathcal{X}_{k,s}^{n}(\varphi)ds
	=
	\int^t_0
	\frac{1}{2 n^{\frac{1}{2}}} \sum_{j \in \Z_{n}} u_{k,j}(sn^2) \Delta^{n} \varphi_{j}^{n}ds,
	\\
	\mathcal{A}_{k,t}^{n}(\varphi) 
	&=& 
	\int^t_0
	n^2 A \mathcal{X}_{k,s}^{n}(\varphi)ds
	=
	-\int^t_0
	\sum_{j \in \Z_{n}} G_{k,j}(sn^2) \nabla^{n} \varphi_{j}^{n} ds,
	\\
	\mathcal{M}_{k,t}^{n}(\varphi)
	&=&
	\mathcal{X}_{k,t}^{n}(\varphi)
	-\mathcal{X}_{k,0}^{n}(\varphi)
	-\mathcal{S}_{k,t}^{n}(\varphi)
	-\mathcal{A}_{k,t}^{n}(\varphi),
\end{eqnarray*}
respectively. Note that the martingale part of the dynamics is explicitly given by
\begin{equation*}
    \mathcal{M}_{k,t}^{n}(\varphi) = \frac{1}{n^{\frac{1}{2}}} \int_{0}^{tn^2} \sum_{j \in \Z_{n}} (\varphi_{j}^{n} - \varphi_{j+1}^{n}) d\xi_{k,j}(s),
\end{equation*}
and has quadratic variation
\begin{equation*}
    \langle \mathcal{M}_{k,\cdot}^{n} (\varphi) \rangle_{t} = t \mathcal{E}_{n} (\nabla^{n} \varphi^{n}),
    \quad
    \text{where}
    \quad
    \mathcal{E}_{n} (\nabla^{n} \varphi^{n})
    =
    \frac{1}{n} \sum_{j\in \Z_n} (\nabla^{n} \varphi^{n})^2.
\end{equation*}


\section{Dynamical Estimates}
\label{sec:Estimates}


The goal of this section is to prove the second order Boltzmann-Gibbs principle (Proposition \ref{thm:BoltzmannGibbs}), which is our main technical estimate. This will be a consequence of the one-block estimates Lemma \ref{thm:one-block-forward} and \ref{thm:one-block-backward}. We start by recalling some general results.


\subsection{The Kipnis-Varadhan estimate}
\label{kipnis-varadhan}


We recall the Kipnis-Varadhan inequality in our context: there exists $C > 0$ such that
\begin{equation*}
    \E_{n} \left[ \sup_{0 \leq t \leq T} \left| \int_{0}^{t} F(u(sn^2))ds \right|^{2} \right] \leq CT\norm{F(\cdot)}_{-1,n}^{2},
\end{equation*}
where the $\norm{\cdot}_{-1,n}$-norm is defined through the variational formula
\begin{equation*}
    \norm{F}_{-1,n} = \sup_{f \in \mathscr{C}} \Bigg\{ 2 \int_{\R^{KM}} F(u) f(u) d\mu_{K,M}(u)  + n^2 \int_{\R^{KM}} f(u)Lf(u) d\mu_{K,M}(u) \Bigg\},
\end{equation*}
where we recall that $\mathscr{C}$ denotes the space of twice continuously differentiable functions from $\R^{KM}$ to $\R$ with polynomial growth of their derivatives up to order two.
The proof of this estimate in our context can be obtained by a straightforward adaptation of \cite[Corollary 3.5]{GP-uniqueness}.
Now, using the definition of $S$ and $A$, we can see that
\begin{eqnarray*}
    \int_{\R^{KM}} f(u)Lf(u) d\mu_{K,M} 
    &=& 
    \int_{\R^{KM}} f(u)Sf(u) d\mu_{K,M}.
\end{eqnarray*}
Using Gaussian integration-by-parts, one can show that the above is equal to
\begin{eqnarray*}
    -\frac{1}{2} \sum_{k \in \Z_{K}} \sum_{j \in \Z_{M}} \int_{\R^{KM}} ((\partial_{k,j+1}-\partial_{k,j}) f(u))^{2} d\mu_{K,M}(u)
    =:
    -\frac{1}{2} D_{M,K}(f),
\end{eqnarray*}
where $D_{M,K}(f)$ corrresponds to the Dirichlet form of $L$. This way,
\begin{eqnarray*}
	\norm{F}_{-1,n} 
	= 
	\sup_{f \in \mathscr{C}} \Bigg\{ 2 \int_{\R^{KM}} F(u) f(u) d\mu_{K,M}(u)  -\frac{n^2}{2} D_{M,K}(f) \Bigg\}.
\end{eqnarray*}


\subsection{The one-block estimates}

We prove the key estimates in the proof of the second-order Boltzmann-Gibbs principle. We define
\begin{eqnarray*}
	\overrightarrow{u}_{k,j}^{l} = \frac{1}{l}\sum_{q=1}^{l} u_{k,j+q},
	\quad
	\overleftarrow{u}_{k,j}^{l} = \frac{1}{l}\sum_{q=0}^{l-1} u_{k,j-q}.
\end{eqnarray*}
We also define the canonical shift $\tau_{k,j}u_{\bar{k},\bar{j}} = u_{k + \bar{k},j+\bar{j}}$ acting on functions as $\tau_{k,j}f(u)=f(\tau_{k,j} u)$.
\begin{lemma}[One-block estimate - forward version]
\label{thm:one-block-forward}
Let $1 \leq l \leq \frac{K}{2}$ and let $g: \R^{KM} \to \R$ be a function with zero mean with respect to $\mu_{K,M}$ such that $\supp{(g)}$ does not intersect $\{(0,1),\dots,(0,l)\}$. Let $g_{k,j} = g(\tau_{k,j}u)$. Then, there exists a constant $C > 0$ such that, for all $\varphi \in l^{2}(\Z_{M})$,
\begin{equation*}
    \E_{n} \Bigg[ \Bigg| \int_{0}^{t} ds \sum_{j \in \Z_{M}} g_{k,j}(sn^2) [u_{k,j+1}(sn^2) - \overrightarrow{u}_{k,j}^{l}(sn^2) ] \varphi_{j}  \Bigg|^{2} \Bigg] \leq C \frac{tl d}{n} \norm{g}^{2}_{L^{2}(\mu_{K,M})} \mathcal{E}_{n}(\varphi),
\end{equation*}
where $d$ denotes the diameter of the support of $g$.
\end{lemma}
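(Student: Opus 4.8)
The plan is to prove this one-block estimate via the Kipnis-Varadhan inequality recalled in Section \ref{kipnis-varadhan}, thereby reducing the $L^2$ bound on a time integral to a static estimate on an $H_{-1}$-type norm. Concretely, fix $k$ and set
\[
    F(u) = \sum_{j \in \Z_M} g_{k,j}(u)\,[u_{k,j+1} - \overrightarrow{u}_{k,j}^{\,l}]\,\varphi_j .
\]
Then $F$ has zero mean under $\mu_{K,M}$ (each summand does, since $g$ has zero mean and its support is disjoint from $\{(0,1),\dots,(0,l)\}$, so $g_{k,j}$ is independent of the coordinates appearing in $u_{k,j+1}-\overrightarrow u_{k,j}^{\,l}$), and Kipnis-Varadhan gives
\[
    \E_n\Bigl[\sup_{0\le t\le T}\Bigl|\int_0^t F(u(sn^2))\,ds\Bigr|^2\Bigr] \le CT\,\norm{F}_{-1,n}^2 .
\]
So it suffices to show $\norm{F}_{-1,n}^2 \le C\,\frac{ld}{n}\,\norm{g}_{L^2}^2\,\mathcal{E}_n(\varphi)$.

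First I would rewrite the telescoping difference $u_{k,j+1} - \overrightarrow{u}_{k,j}^{\,l} = \frac{1}{l}\sum_{q=1}^{l}(u_{k,j+1} - u_{k,j+q}) = -\frac{1}{l}\sum_{q=1}^{l}\sum_{p=1}^{q-1}(u_{k,j+p+1}-u_{k,j+p})$, expressing everything in terms of the gradients $\nabla_{k,i} := u_{k,i+1}-u_{k,i}$ that appear in the Dirichlet form $D_{M,K}$. Plugging into the variational formula, for any test function $f\in\mathscr{C}$ I must bound
\[
    2\int F f\,d\mu_{K,M} - \frac{n^2}{2}D_{M,K}(f) .
\]
The first term, after the telescoping, becomes a sum over $j$, over $q\le l$, over $p<q$ of terms $\frac1l\int g_{k,j}\,\nabla_{k,j+p}\,f\,\varphi_j\,d\mu_{K,M}$. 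The key move is a Gaussian integration by parts in the variable pair $(k,j+p)$, or more precisely against the gradient direction $\partial_{k,j+p+1}-\partial_{k,j+p}$: since $g_{k,j}$ does not depend on $u_{k,j+p}$ or $u_{k,j+p+1}$ (as $p\le l-1$ and $\supp(g)$ avoids $\{(0,1),\dots,(0,l)\}$), one can write $\nabla_{k,j+p}$ as the image under the adjoint of $\partial_{k,j+p+1}-\partial_{k,j+p}$ of a constant, transferring the derivative onto $f$ and producing $\int g_{k,j}(\partial_{k,j+p+1}-\partial_{k,j+p})f\,\varphi_j\,d\mu$. Then apply Young's inequality $2ab\le Aa^2+A^{-1}b^2$ with a parameter $A$ tuned to absorb the resulting square of gradients of $f$ into $\frac{n^2}{2}D_{M,K}(f)$; the leftover is $\frac{C}{A n^2}\sum (\text{stuff})^2$, and one must check that the combinatorial count ($q$ ranges over $\le l$ values, $p$ over $\le q-1\le l-1$ values, giving $\sim l^2$ terms, but the $1/l$ prefactor and a Cauchy-Schwarz in $p,q$ recover one factor of $l$) together with $\sum_j \varphi_j^2 = n\,\mathcal{E}_n(\varphi)$ and $\int g_{k,j}^2 d\mu = \norm{g}_{L^2}^2$ yields the claimed $\frac{ld}{n}\norm{g}^2\mathcal{E}_n(\varphi)$ after optimizing over $A$. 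The diameter $d$ enters because for a fixed gradient index $i=j+p$ only $O(d)$ values of $j$ give a $g_{k,j}$ that is correlated with other terms sharing that index, so the cross terms in expanding the square contribute a factor $d$ rather than $l$ or $M$.

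The main obstacle I anticipate is the bookkeeping in the Young/Cauchy-Schwarz step: one has a double sum (over the telescoping indices $p,q$ and over $j$) of integrals each involving one gradient of $f$, and one must distribute the weight $\frac{n^2}{2}D_{M,K}(f) = \frac{n^2}{4}\sum_{k',i}\int((\partial_{k',i+1}-\partial_{k',i})f)^2 d\mu$ correctly across all these terms — choosing how much Dirichlet mass to allocate to each $(k,i)$ index, and handling the overlap that a single gradient direction $\partial_{k,i+1}-\partial_{k,i}$ may be hit by several $j$'s (those with $j \le i \le j+l-1$ roughly, hence $O(l)$ of them, but only $O(d)$ with genuinely correlated $g$-factors). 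Getting the sharp power of $l$ (linear, not quadratic) requires care: one should group the $\sim l^2$ telescoped terms, use Cauchy-Schwarz in the $(p,q)$ indices to trade $l^2 \to l$ against the $1/l^2$ from the prefactor squared, and only then apply Young's inequality index-by-index in the spatial variable. Everything else — the zero-mean property, the integration by parts, the final substitution $\sum_j\varphi_j^2 = n\mathcal{E}_n(\varphi)$ — is routine. The backward version Lemma \ref{thm:one-block-backward} will follow by the symmetric argument with $\overleftarrow{u}$ in place of $\overrightarrow{u}$.
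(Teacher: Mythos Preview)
Your approach is essentially the same as the paper's. The paper streamlines the bookkeeping you flag by first collapsing the double $(q,p)$-sum into a single sum with weights $\psi_i = (l-i)/l \in [0,1]$ and then reindexing by the gradient location $p=j+i$, writing $F = \sum_p F_p\,(u_{k,p}-u_{k,p+1})$ with $F_p = \sum_{i=1}^{l-1} \psi_i \varphi_{p-i} g_{k,p-i}$; this makes the integration-by-parts and Young step a one-liner (one gradient direction per $p$, so no Dirichlet-mass allocation issue) and reduces everything to bounding $\sum_p \E[F_p^2]$, where the factor $d$ enters exactly via your independence observation, by grouping the $l-1$ summands modulo $d$.
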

\begin{proof}
Let $\displaystyle \psi_{i} = \frac{l-i}{l}$, $i = 0,\dots,l-1$. Then,
\begin{align*}
    u_{k,j+1} - \overrightarrow{u}_{k,j}^{l} 
    & = \frac{1}{l} \sum_{q=1}^{l} ( u_{k,j+1} - u_{k,j+q}) \\
    & = \frac{1}{l} \sum_{q=1}^{l} \sum_{i=1}^{q-1} (u_{k,j+i} - u_{k,j+i+1}) 
    = \frac{1}{l} \sum_{i=1}^{l-1} \sum_{q=i+1}^{l} (u_{k,j+i} - u_{k,j+i+1}) \\
    & = \sum_{i=1}^{l-1} \psi_{i} (u_{k,j+i} - u_{k,j+i+1}).
\end{align*}
Hence, writing $p : = j+i$,
\begin{align*}
     \sum_{j \in \Z_{M}} g_{k,j} \varphi_{j} (u_{k,j+1} - \overrightarrow{u}_{k,j}^{l} ) & = \sum_{j \in \Z_{M}} g_{k,j} \varphi_{j} \sum_{i=1}^{l-1} \psi_{i} (u_{k,j+i} - u_{k,j+i+1}) \\
     & = \sum_{p} \Bigg( \sum_{i=1}^{l-1} \psi_{i} \varphi_{p-i} g_{k,p-i} \Bigg) (u_{k,p} - u_{k,p+1}) \\
     & =: \sum_{p} F_{p}(u_{k,p} - u_{k,p+1}).
\end{align*}
Now, for $f \in \mathscr{C}$, using Gaussian integration-by-parts,
\begin{align*}
    2 \int_{\R^{KM}} \sum_{j \in \Z_{M}} g_{k,j} \varphi_{j} (u_{k,j+1} - \overrightarrow{u}_{k,j}^{l}) f d\mu_{K,M} & = 2 \int_{\R^{KM}} \sum_{p} F_{p} (u_{k,p} - u_{k,p+1}) f d\mu_{K,M} \\
    & = 2 \int_{\R^{KM}} \sum_{p} F_{p} (\partial_{k,p} - \partial_{k,p+1}) f d\mu_{K,M} \\
    & \leq \int_{\R^{KM}} \sum_{p} \Bigg\{ \alpha F_{p}^{2} + \frac{1}{\alpha}((\partial_{k,p} - \partial_{k,p+1})f)^{2} \Bigg\}d\mu_{K,M},
\end{align*}
by the weighted Young's inequality. Taking $\alpha = \frac{2}{n^2}$, we find that the above is bounded by
\begin{equation*}
    \frac{2}{n^2} \sum_{p} \int_{\R^{KM}} F_{p}^{2} d\mu_{K,M} + \frac{n^2}{2} \sum_{p} \int_{\R^{KM}} ((\partial_{k,p} - \partial_{k,p+1})f)^{2} d\mu_{K,M}.
\end{equation*}
By the Kipnis-Varadhan inequality, there exists $C > 0$ such that
\begin{equation*}
    \E_{n} \Bigg[ \Bigg| \int_{0}^{t} ds \sum_{j \in \Z_{M}} g_{k,j}(sn) [u_{k,j+1}(sn) - \overrightarrow{u}_{k,j}^{l}(sn) ] \varphi_{j}  \Bigg|^{2} \Bigg]  \leq \frac{C t}{n^2} \sum_{p} \int_{\R^{KM}} F_{p}^{2} d\mu_{K,M}.
\end{equation*}
We are then left with estimating the right-hand side above:
\begin{equation*}
    \sum_{p} \int_{\R^{KM}} F_{p}^{2} d\mu_{K,M} = \sum_{p} \int_{\R^{KM}} \Bigg( \sum_{i=1}^{l-1} \psi_{i} \varphi_{p-i} g_{k,p-i} \Bigg)^{2} d\mu_{K,M}.
\end{equation*}
Observe that, as $\supp{(g)} \cap \{(0,1),\dots, (0,l)\} = \emptyset$, it holds that, if $d$ denotes the diameter of $\supp{(g)}$ and if $\abs{j-j'} > d$, then $g_{k,j}$ and $g_{k,j'}$ are independent. 
Assume first that $d<l$.
Denoting $a_{i} =\psi_{i} \varphi_{p-i} g_{k,p-i}$, decomposing $l-1 = md + r$ with $0 \leq r < d$ and using Jensen's inequality, we have
\begin{align*}
    \E_{n} \left[ F_{p}^{2} \right] 
    & = \E_{n} \Bigg[ \Bigg( \sum_{z=0}^{d-1} \sum_{z'=0}^{m-1} a_{zm+z'+1} + \sum_{i=md+1}^{l-1} a_{i}  \Bigg)^{2} \Bigg] 
    \\
    & \leq 
    2 \E_{n} \Bigg[ \Bigg( \sum_{z=0}^{d-1} \sum_{z'=0}^{m-1} a_{zm+z'+1} \Bigg)^{2} \Bigg] + 2 \E_{n} \Bigg[ \Bigg( \sum_{i=md+1}^{l-1} a_{i} \Bigg)^{2} \Bigg] 
    \\
    & \leq 2d \sum_{z=0}^{d-1} \E_{n}\Bigg[ \Bigg( \sum_{z'=0}^{m-1} a_{zm+z'+1} \Bigg)^{2} \Bigg] + 2r\E_{n} \Bigg[ \sum_{i=md+1}^{l-1} a_{i}^{2} \Bigg] 
    \\
    & = 2d \sum_{z=0}^{d-1} \E_{n} \Bigg[  \sum_{z'=0}^{m-1} a_{zm+z'+1}^{2} \Bigg] + 2r \E_{n} \Bigg[ \sum_{i=md+1}^{l-1} a_{i}^{2} \Bigg]
    \\
    & \leq 2d \E_{n} \Bigg[  \sum_{i=1}^{l-1} a_{i}^{2} \Bigg] 
    = 2d \E_{n} \Bigg[ \sum_{i=1}^{l-1} \psi_{i}^{2} \varphi_{p-i}^{2} g_{k,p-i}^{2} \Bigg] 
    \\
    & \leq 2d \sum_{i=1}^{l-1} \varphi_{p-i}^{2} \E_{n}[g_{k,p-i}^{2}] 
    = 2d \sum_{i=1}^{l-1} \varphi_{p-i}^{2} \norm{g}_{L^{2}(\mu_{K,M})}^{2},
\end{align*}
where we used that $\psi_{i} \in [0,1]$ in the last inequality. If $d \geq l$, we use the crude bound $F_{p}^{2} \leq 2d \sum_{i=0}^{l-1} \varphi_{p-i}^{2} g_{k,p-i}^{2}$. In any case,
\begin{align*}
    \frac{t}{n^2} \sum_{p} \int_{\R^{KM}} F_{p}^{2} d\mu_{K,M} 
    & \leq \frac{2td}{n^2} \sum_{p}\sum_{i=0}^{l-1} \varphi_{p-i}^{2} \norm{g}_{L^{2}(\mu_{K,M})}^{2} \\
    & \leq \frac{2tld}{n^2} \sum_{j} \varphi_{j}^{2} \norm{g}_{L^{2}(\mu_{K,M})}^{2} \\
    & \leq \frac{2tld}{n} \norm{g}_{L^{2}(\mu_{K,M})}^{2} \mathcal{E}_{n}(\varphi).
\end{align*}
This finishes the proof.
\end{proof}

\begin{lemma}[One-block estimate - backward version]
\label{thm:one-block-backward}
Let  $1 \leq l \leq \frac{K}{2}$and let $g:\R^{KM} \to \R$ be a function with zero mean with respect to $\mu$ such that $\supp{(g)}$ does not intersect $\{(0,-1),\dots,(0,-l)\}$. Let $g_{k,j} = g(\tau_{k,j}u)$. Then, there exists a constant $C > 0$ such that, for all $\varphi \in l^{2}(\Z_{M})$,
\begin{equation*}
    \E_{n} \Bigg[ \Bigg| \int_{0}^{t} ds \sum_{j \in \Z_{M}} g_{k,j}(sn^2) [u_{k,j}(sn^2) - \overleftarrow{u}_{k,j}^{l}(sn^2) ] \varphi_{j}  \Bigg|^{2} \Bigg] \leq C \frac{tl d}{n} \norm{g}^{2}_{L^{2}(\mu_{K,M})} \mathcal{E}_{n}(\varphi).
\end{equation*}
\end{lemma}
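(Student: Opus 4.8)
The plan is to mirror the proof of Lemma~\ref{thm:one-block-forward} almost verbatim, exploiting the evident symmetry between the forward and backward averages under the reflection $j \mapsto -j$. First I would write the telescoping identity for the backward difference: with $\psi_i = \frac{l-i}{l}$ for $i=0,\dots,l-1$, one checks that
\begin{equation*}
    u_{k,j} - \overleftarrow{u}_{k,j}^{l}
    = \frac{1}{l}\sum_{q=0}^{l-1}(u_{k,j} - u_{k,j-q})
    = \sum_{i=1}^{l-1}\psi_i\,(u_{k,j-i+1} - u_{k,j-i}),
\end{equation*}
so that, after the change of summation index $p := j-i$ (so the increment $u_{k,p+1}-u_{k,p}$ appears), we obtain $\sum_{j} g_{k,j}\varphi_j (u_{k,j} - \overleftarrow{u}_{k,j}^{l}) = \sum_p F_p (u_{k,p} - u_{k,p+1})$ with $F_p = -\sum_{i=1}^{l-1}\psi_i \varphi_{p+i} g_{k,p+i}$, a quantity of exactly the same shape as in the forward case but with the support of $g$ sitting on the opposite side of the gap.

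Next I would invoke the Kipnis--Varadhan estimate of Section~\ref{kipnis-varadhan} together with Gaussian integration-by-parts and the weighted Young inequality with $\alpha = 2/n^2$, exactly as in the forward proof, to reduce the left-hand side to $\frac{Ct}{n^2}\sum_p \int F_p^2\, d\mu_{K,M}$. Since the Dirichlet-form term produced is again $\frac{n^2}{2}\sum_p\int ((\partial_{k,p}-\partial_{k,p+1})f)^2 d\mu_{K,M} \le \frac{n^2}{2} D_{M,K}(f)$, this step is identical and requires no new idea.

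Finally I would estimate $\sum_p \int F_p^2\, d\mu_{K,M}$. Because $\supp(g)$ does not intersect $\{(0,-1),\dots,(0,-l)\}$, the random variables $g_{k,j}$ and $g_{k,j'}$ are independent whenever $|j-j'| > d$ (with $d$ the diameter of $\supp(g)$), which is precisely the independence property used before; hence the same Jensen/block-decomposition argument (splitting $l-1 = md + r$, grouping the $a_i := \psi_i \varphi_{p+i} g_{k,p+i}$ into $d$ independent blocks of size $m$, using $\psi_i\in[0,1]$, and the crude bound $F_p^2 \le 2d\sum_{i=0}^{l-1}\varphi_{p+i}^2 g_{k,p+i}^2$ when $d\ge l$) gives $\E_n[F_p^2] \le 2d\sum_{i=0}^{l-1}\varphi_{p+i}^2 \norm{g}_{L^2(\mu_{K,M})}^2$. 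Summing over $p$ and bounding $\sum_p\sum_{i=0}^{l-1}\varphi_{p+i}^2 \le l\sum_j \varphi_j^2 = ln\,\mathcal{E}_n(\varphi)$ yields
\begin{equation*}
    \frac{t}{n^2}\sum_p \int_{\R^{KM}} F_p^2\, d\mu_{K,M}
    \le \frac{2tld}{n}\norm{g}_{L^2(\mu_{K,M})}^2\,\mathcal{E}_n(\varphi),
\end{equation*}
which is the claimed bound. There is essentially no genuine obstacle here: the only point demanding a little care is getting the index bookkeeping in the telescoping identity and in the shift $p=j-i$ right so that the increment $(u_{k,p}-u_{k,p+1})$ — the one dual to the Dirichlet form $(\partial_{k,p}-\partial_{k,p+1})$ appearing in $D_{M,K}$ — is the one that actually shows up; once that is arranged the forward proof transfers line by line. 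One could alternatively phrase the whole thing as a corollary of Lemma~\ref{thm:one-block-forward} applied to the reflected configuration $\tilde u_{k,j} := u_{k,-j}$, noting that $\mu_{K,M}$ and the Dirichlet form are invariant under this reflection, but writing out the direct argument is just as short.
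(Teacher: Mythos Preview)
Your proposal is exactly what the paper intends: the paper's proof consists of the single sentence ``The proof is completely analogous to the proof of Lemma~\ref{thm:one-block-forward}'', and your write-up carries out precisely that analogy, with the correct telescoping identity, the same Gaussian integration-by-parts plus weighted Young inequality step, and the same block-decomposition estimate for $\sum_p \int F_p^2\,d\mu_{K,M}$. Nothing further is required.
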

\begin{proof}
The proof is completely analogous to the proof of Lemma \ref{thm:one-block-forward}.
\end{proof}


\subsection{The second order Boltzmann-Gibbs principle}
\label{subsec:boltzmann-gibbs}

We now prove our main estimates.
\begin{proposition}[Second-order Boltzmann-Gibbs principle]
\label{thm:BoltzmannGibbs}
Let  $1 \leq l \leq \frac{K}{2}$. There exists a constant $C > 0$ such that, for all $\varphi \in l^{2}(\Z_{M})$,
\begin{equation*}
    \E_{n} \Bigg[ \Bigg| \int_{0}^{t} ds \sum_{j \in \Z_{M}} [u_{k,j}(sn) u_{k,j+1}(sn) -\overleftarrow{u}_{k,j}^{l}(sn) \overrightarrow{u}_{k,j}^{l}(sn) ] \varphi_{j}  \Bigg|^{2} \Bigg] \leq C \frac{tl}{n} \mathcal{E}_{n}(\varphi).
\end{equation*}
\end{proposition}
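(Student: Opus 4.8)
The plan is to write the difference $u_{k,j}u_{k,j+1} - \overleftarrow{u}_{k,j}^{l}\,\overrightarrow{u}_{k,j}^{l}$ as a telescoping sum of two terms, each of which is of the form handled by the one-block estimates (Lemma \ref{thm:one-block-forward} and Lemma \ref{thm:one-block-backward}). Concretely, I would use the algebraic identity
\begin{equation*}
    u_{k,j}u_{k,j+1} - \overleftarrow{u}_{k,j}^{l}\,\overrightarrow{u}_{k,j}^{l}
    =
    \bigl(u_{k,j} - \overleftarrow{u}_{k,j}^{l}\bigr) u_{k,j+1}
    +
    \overleftarrow{u}_{k,j}^{l}\bigl(u_{k,j+1} - \overrightarrow{u}_{k,j}^{l}\bigr).
\end{equation*}
Then the integral splits into a sum of two space-time integrals, and by the elementary inequality $|a+b|^2 \le 2|a|^2 + 2|b|^2$ it suffices to bound each separately.

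For the first term, I would apply the backward one-block estimate (Lemma \ref{thm:one-block-backward}) with $g_{k,j} = u_{k,j+1}$, i.e. $g(u) = u_{0,1}$. This has zero mean under $\mu_{K,M}$, its support is $\{(0,1)\}$, which does not intersect $\{(0,-1),\dots,(0,-l)\}$, and its diameter is $d=0$ (or, with the convention that a singleton has diameter $1$, $d=1$; in any case $d$ is an absolute constant independent of $n$ and $l$), while $\norm{g}_{L^2(\mu_{K,M})}^2 = 1$. The lemma then gives a bound of order $\frac{tl}{n}\mathcal{E}_n(\varphi)$. For the second term, since $\overleftarrow{u}_{k,j}^{l}$ is not a fixed shift of a single function $g$, I would instead expand $\overleftarrow{u}_{k,j}^{l} = \frac1l\sum_{q=0}^{l-1} u_{k,j-q}$ and write the second space-time integral as $\frac1l\sum_{q=0}^{l-1}$ of integrals $\int_0^t ds \sum_j u_{k,j-q}(sn^2)\,[u_{k,j+1}-\overrightarrow{u}_{k,j}^{l}](sn^2)\,\varphi_j$. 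Each of these, after the change of summation index $j \mapsto j+q$ so that the "observable" sits at position $0$ relative to a shifted increment, is of the form controlled by the forward one-block estimate (Lemma \ref{thm:one-block-forward}) with $g(u) = u_{0,-q}$ — note that for $0 \le q \le l-1 < l$ the support $\{(0,-q)\}$ indeed avoids $\{(0,1),\dots,(0,l)\}$ — with $d$ an absolute constant and $\norm{g}^2 = 1$, and with the test function $\varphi$ replaced by a shift of $\varphi$, which does not change $\mathcal{E}_n(\varphi)$. This yields a bound of order $\frac{tl}{n}\mathcal{E}_n(\varphi)$ for each $q$; averaging over $q$ preserves this bound by Jensen (or simply because all $l$ terms obey the same bound), and one does not lose the factor $l$.

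The main point requiring care — and the only genuine obstacle — is precisely this bookkeeping with the shifts: one must check that after re-indexing, the forward (resp. backward) one-block estimate applies with the correct direction and with a support disjoint from the relevant window $\{(0,1),\dots,(0,l)\}$ (resp. $\{(0,-1),\dots,(0,-l)\}$), and that the $\frac1l$ prefactor from $\overleftarrow{u}^l$ together with the sum over $l$ values of $q$ combines to give exactly one power of $l$, not $l^2$. Since each of the $l$ summands is bounded by $C\frac{t l d}{n}\norm{g}^2\mathcal{E}_n(\varphi)$ with $d=O(1)$ and $\norm{g}^2 = 1$, and the overall prefactor is $\frac1l$, one could naively get $\frac1l \cdot l \cdot \frac{tl}{n} = \frac{tl}{n}$ only if one is careful to use the triangle inequality in $L^2$ rather than expanding the square of the sum; in fact the cleanest route is to note $\bigl\|\frac1l\sum_q X_q\bigr\|_{L^2} \le \frac1l\sum_q \|X_q\|_{L^2} \le \max_q \|X_q\|_{L^2}$, and apply the one-block estimate to each $X_q$. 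Collecting the two contributions and absorbing all absolute constants into $C$ completes the proof.
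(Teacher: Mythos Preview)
Your approach is correct and essentially mirrors the paper's proof: both factor the difference and apply the forward and backward one-block lemmas to the two pieces. The paper uses the mirror factorization
\[
u_{k,j}u_{k,j+1} - \overleftarrow{u}_{k,j}^{l}\,\overrightarrow{u}_{k,j}^{l}
= u_{k,j}\bigl(u_{k,j+1} - \overrightarrow{u}_{k,j}^{l}\bigr)
+ \overrightarrow{u}_{k,j}^{l}\bigl(u_{k,j} - \overleftarrow{u}_{k,j}^{l}\bigr),
\]
but this is an immaterial choice.

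The one place where you take an unnecessary detour is the second term. Your claim that $\overleftarrow{u}_{k,j}^{l}$ ``is not a fixed shift of a single function $g$'' is false: it is exactly $g(\tau_{k,j}u)$ for $g(u)=\tfrac{1}{l}\sum_{q=0}^{l-1} u_{0,-q}$, whose support $\{(0,0),\dots,(0,-(l-1))\}$ avoids $\{(0,1),\dots,(0,l)\}$. The paper applies the one-block lemma directly to this averaged $g$ (in its factorization, to $\overrightarrow{u}_{k,j}^{l}$), noting that the diameter is $d=l$ while $\|g\|_{L^2(\mu_{K,M})}^2 = 1/l$, so that $d\,\|g\|^2 = 1$ and the bound $C\,tl/n\,\mathcal{E}_n(\varphi)$ follows in a single step. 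Your expansion of $\overleftarrow{u}^{l}$ into $l$ summands followed by the $L^2$ triangle inequality recovers the same bound but is more work; no re-indexing of $j$ is needed either, since $g(u)=u_{0,-q}$ already satisfies the support hypothesis of Lemma~\ref{thm:one-block-forward}.
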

\begin{proof}
We use the factorization
\begin{equation*}
    u_{k,j}u_{k,j+1} - \overleftarrow{u}_{k,j}^{l} \overrightarrow{u}_{k,j}^{l} = u_{k,j}(u_{k,j+1} - \overrightarrow{u}_{k,j}^{l}) + \overrightarrow{u}_{k,j}^{l}(u_{k,j} - \overleftarrow{u}_{k,j}^{l}).
\end{equation*}
We handle the first term with Lemma \ref{thm:one-block-forward} with $g_{k,j} = u_{k,j}$ and the second one with Lemma \ref{thm:one-block-backward} with $g_{k,j} = \overrightarrow{u}_{k,j}^{l}$, noting that the diameter of the support of $u_{k,j}$ and $\overrightarrow{u}_{k,j}^{l}$ is $1$ and $l$ respectively, and noting that
\begin{eqnarray*}
	\norm{u_{k,j}}^{2}_{L^{2}(\mu_{K,M})}=1,
	\quad
	\norm{\overrightarrow{u}_{k,j}^{l}}^{2}_{L^{2}(\mu_{K,M})} = \frac{1}{l}.
\end{eqnarray*}
\end{proof}

\begin{proposition}[Second-order Boltzmann-Gibbs principle for crossed terms]
\label{thm:BoltzmannGibbsCrossed}
Let $l \geq 1$. There exists a constant $C > 0$ such that, for all $\varphi \in l^{2}(\Z_{M})$ and $k \neq \bar{k}$,
\begin{equation*}
    \E_{n} \Bigg[ \Bigg| \int_{0}^{t} ds \sum_{j \in \Z_{M}} [u_{k,j}(sn) u_{\bar{k},j}(sn) -\overrightarrow{u}_{k,j-1}^{l}(sn) \overrightarrow{u}_{\bar{k},j-1}^{l}(sn) ] \varphi_{j}  \Bigg|^{2} \Bigg] \leq C \frac{tl}{n} \mathcal{E}_{n}(\varphi).
\end{equation*}
\end{proposition}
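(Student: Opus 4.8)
The plan is to mimic the proof of Proposition \ref{thm:BoltzmannGibbs}, using a telescoping decomposition adapted to the crossed term. Write
\begin{equation*}
    u_{k,j}u_{\bar k,j} - \overrightarrow{u}_{k,j-1}^{l}\,\overrightarrow{u}_{\bar k,j-1}^{l}
    =
    u_{k,j}\big(u_{\bar k,j} - \overrightarrow{u}_{\bar k,j-1}^{l}\big)
    +
    \overrightarrow{u}_{\bar k,j-1}^{l}\big(u_{k,j} - \overrightarrow{u}_{k,j-1}^{l}\big),
\end{equation*}
so that the left-hand side splits into two contributions, each of the form treated by the one-block estimates. Note that $\overrightarrow{u}_{\bar k,j-1}^{l} = \frac{1}{l}\sum_{q=1}^{l}u_{\bar k,j-1+q} = \frac{1}{l}\sum_{q=0}^{l-1}u_{\bar k,j+q}$, i.e. it is the forward average starting at $j$; in particular $u_{k,j} - \overrightarrow{u}_{k,j-1}^{l} = \frac{1}{l}\sum_{q=1}^{l-1}(u_{k,j}-u_{k,j+q})$ telescopes just as in Lemma \ref{thm:one-block-forward}.

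For the first contribution, I would apply the forward one-block estimate Lemma \ref{thm:one-block-forward} with $g_{k',j'} = u_{\bar k,j'}$ — more precisely, since $k \neq \bar k$, the relevant local function is $g(u) = u_{\bar k - k,0}$ evaluated along the shift $\tau_{k,j}$, whose support is the single site $(\bar k - k, 0)$, which does not meet $\{(0,1),\dots,(0,l)\}$ because $\bar k - k \neq 0$ in $\Z_K$; here the diameter is $d=1$ and $\norm{g}_{L^2(\mu_{K,M})}^2 = 1$, and the ``spectator'' factor $u_{k,j}$ plays the role of the telescoped variable. For the second contribution, apply the one-block estimate with $g$ equal to the forward average $\overrightarrow{u}_{\bar k,\cdot}^{l}$ shifted appropriately; its support is $\{(\bar k - k,0),\dots,(\bar k - k, l-1)\}$ (again disjoint from $\{(0,1),\dots,(0,l)\}$ since $\bar k \neq k$), with diameter $d = l$ and $\norm{\overrightarrow{u}_{\bar k,j}^{l}}_{L^2(\mu_{K,M})}^2 = 1/l$. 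In both cases the right-hand side of the one-block estimate is bounded by $C\,\frac{t\, l \cdot d}{n}\norm{g}^2\,\mathcal{E}_n(\varphi)$, which evaluates to $C\frac{tl}{n}\mathcal{E}_n(\varphi)$ for the first term (with $d=1$, $\norm{g}^2=1$) and $C\frac{t l^2}{n}\cdot\frac{1}{l}\mathcal{E}_n(\varphi) = C\frac{tl}{n}\mathcal{E}_n(\varphi)$ for the second. Adding the two and using $(a+b)^2 \le 2a^2 + 2b^2$ gives the claim.

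The main subtlety — and the reason the hypothesis $l \ge 1$ suffices here with no restriction $l \le K/2$, unlike in Lemma \ref{thm:one-block-forward} — is the independence/support bookkeeping: one must check that in both applications the local function $g$ has support avoiding the coordinates $\{(0,1),\dots,(0,l)\}$ along which the telescoping is performed, so that the one-block estimate applies verbatim. Because $k \neq \bar k$, all sites appearing in $g$ lie in the layer $\bar k - k \ne 0$, while the telescoping variables $u_{k,j},u_{k,j+1},\dots$ lie in layer $0$; hence the supports are automatically disjoint and the mean-zero hypothesis ($\E_{\mu_{K,M}}[u_{\bar k,j}] = 0$, $\E_{\mu_{K,M}}[\overrightarrow{u}_{\bar k,j}^{l}] = 0$) is immediate. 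I expect this verification to be the only real content; the rest is a direct invocation of the already-established one-block estimates, exactly as in Proposition \ref{thm:BoltzmannGibbs}.
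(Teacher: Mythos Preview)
Your proposal is correct and follows exactly the same route as the paper: the identical factorization
\[
u_{k,j}u_{\bar k,j} - \overrightarrow{u}_{k,j-1}^{l}\overrightarrow{u}_{\bar k,j-1}^{l}
= u_{k,j}\big(u_{\bar k,j} - \overrightarrow{u}_{\bar k,j-1}^{l}\big)
+ \overrightarrow{u}_{\bar k,j-1}^{l}\big(u_{k,j} - \overrightarrow{u}_{k,j-1}^{l}\big),
\]
followed by two applications of the forward one-block estimate, with the support hypothesis satisfied automatically because $k\neq\bar k$ places the local function $g$ in a different layer from the telescoped coordinates. One small bookkeeping slip: in the first term the spectator $g$ is $u_{k,j}$ (so $g(u)=u_{k-\bar k,1}$ under the shift $\tau_{\bar k,j-1}$) and the telescoped variable is $u_{\bar k,j}-\overrightarrow{u}_{\bar k,j-1}^{l}$, not the other way around; the disjointness check and the values $d=1$, $\|g\|^2=1$ are unaffected.
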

\begin{proof}
This time, we use the factorization
\begin{equation*}
    u_{k,j}u_{\bar{k},j} - \overrightarrow{u}_{k,j-1}^{l} \overrightarrow{u}_{\bar{k},j-1}^{l} = u_{k,j}(u_{\bar{k},j} - \overrightarrow{u}_{\bar{k},j-1}^{l}) + \overrightarrow{u}_{\bar{k},j-1}^{l}(u_{k,j} - \overrightarrow{u}_{k,j-1}^{l})
\end{equation*}
and proceed as in the proof of Proposition \ref{thm:BoltzmannGibbs}.
\end{proof}


\section{Tightness}
\label{sec:tightness}


In the following, we will use Mitoma’s criterion \cite{mitoma}: a sequence of random distributions $(\gamma^{n})_n$ is tight in
$C([0, T], \mathcal{S}'(\T))$ if and only if $\gamma^{n}(\varphi)$ is tight in $C([0, T], \R)$ for all $\varphi \in \mathcal{S}(\T)$. We will show tightness of the symmetric, anti-symetric and martingale parts separately. We fix $\varphi \in \mathcal{S}(\T)$ for the remainder of this section.


\subsection{Martingale term}


We recall that $\langle \mathcal{M}_{k,\cdot}^{n} (\varphi) \rangle_{t} = t \mathcal{E}_{n} (\nabla^{n} \varphi^{n})$. From the Burkholder-Davis-Gundy inequality, it then follows that
\begin{align*}
    \E_{n} \Big[ \abs{\mathcal{M}_{k,t}^{n} (\varphi) - \mathcal{M}_{k,s}^{n} (\varphi)}^{p}  \Big] & \leq C_{p} \E_{n} \Big[ \langle \mathcal{M}_{k,\cdot}^{n} (\varphi) \rangle_{t-s}^{\frac{p}{2}}  \Big] 
    = C_{p} (t-s)^{\frac{p}{2}} \mathcal{E}_{n} (\nabla^{n} \varphi^{n})^{\frac{p}{2}}
\end{align*}
for all $p \geq 1$, some constant finite constant $C_p>0$. Tightness then follows from Kolmogorov's tightness criterion by taking $p > 3$.


\subsection{Symmetric term}

This term can be handled by an $L^2$ estimate:
\begin{align*}
     \E_{n} \Big[ \abs{\mathcal{S}_{k,t}^{n} (\varphi) - \mathcal{S}_{k,s}^{n} (\varphi)}^{2}  \Big] 
     & = \E_{n} \Big[ \Big| \int_{s}^{t}\frac{1}{2 n^{\frac{1}{2}}} \sum_{j \in \Z_{M}} u_{k,j}(\tau n^2) \Delta^{n} \varphi_{j}^{n}d \tau \Big|^{2}  \Big] 
     \leq
     \frac{\abs{t-s}^{2}}{4} \mathcal{E}_{n}(\Delta^{n} \varphi^{n}),
\end{align*}
where we used Jensen's inequality and the fact that $\{u_{k,j}\}_{j \in \Z_{M}}$ is an i.i.d. family of centered Gaussian random variables. Tightness then follows once again from Kolmogorov's criterion.


\subsection{Anti-symmetric term}

We state our main estimate on the anti-symmetric part.
\begin{proposition}
The anti-symmetric part of the dynamics satisfies
\begin{equation*}
    \E_{n} \Big[ \big| \mathcal{B}_{k,t}(\varphi) \big|^{2} \Big] \leq Ct^{\frac{3}{2}}
\end{equation*}
for every $\varphi \in \mathcal{S}(\T)$. Furthermore, each term in $\mathcal{B}_{k,t}(\varphi)$ satisfies the same bound by itself.
\end{proposition}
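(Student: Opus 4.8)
The plan is to estimate the anti-symmetric term $\mathcal{B}_{k,t}(\varphi)=-\int_0^t\sum_{j\in\Z_n}G_{k,j}(sn^2)\,\nabla^n\varphi_j^n\,ds$ one summand at a time. By definition $G_{k,j}$ is a fixed finite linear combination of $w_{k,j}$, the $b_{k,j}^l$, the $r_{k,j}^l$ and the $p_{k,j}^{l,l'}$, and after re-indexing the sums over $j$ each of those is a fixed combination of quadratic monomials of one of the types $u_{a,j}u_{a,j+1}$, $u_{a,j}^2$, $u_{a,j}u_{b,j}$ with $a\neq b$, or $u_{a,j}u_{b,j+1}$ with $a\neq b$, each summed against a sequence $\psi$ built from $\nabla^n\varphi^n$ and its shifts, so that $\mathcal{E}_n(\psi)\le C$. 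Since $\sum_{j\in\Z_n}\nabla^n\varphi_j^n=0$ the constant expectation of each monomial drops out, so we may take each monomial centered; moreover stationarity of $\mathbb{P}_n$ turns the increment $\mathcal{B}_{k,t}(\varphi)-\mathcal{B}_{k,s}(\varphi)$ into a copy of $\mathcal{B}_{k,t-s}(\varphi)$, so the stated bound on $\mathcal{B}_{k,t}(\varphi)$ is exactly what Kolmogorov's criterion needs downstream.

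For a centered monomial the key is to interpolate, at a scale $\ell$ to be optimized, between a Boltzmann--Gibbs estimate and a crude static estimate. The same-layer monomials $u_{a,j}u_{a,j+1}$ are handled by Proposition \ref{thm:BoltzmannGibbs} and the cross-layer monomials $u_{a,j}u_{b,j}$ by Proposition \ref{thm:BoltzmannGibbsCrossed}: these replace the monomial by a product of block averages over two disjoint windows of length $\ell$, namely $\overleftarrow{u}_{a,j}^{\ell}\overrightarrow{u}_{a,j}^{\ell}$, respectively $\overrightarrow{u}_{a,j-1}^{\ell}\overrightarrow{u}_{b,j-1}^{\ell}$, at a cost bounded by $C(t\ell/n)\mathcal{E}_n(\psi)\le Ct\ell/n$. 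The remaining monomials are first reduced to this situation by the elementary splittings $u_{a,j}^2=u_{a,j}u_{a,j+1}+u_{a,j}(u_{a,j}-u_{a,j+1})$ and $u_{a,j}u_{b,j+1}=u_{a,j}u_{b,j}+u_{a,j}(u_{b,j+1}-u_{b,j})$, where the remainder is a bounded function times a single gradient; applying Kipnis--Varadhan with the weighted Young inequality, exactly as in the opening lines of the proof of Lemma \ref{thm:one-block-forward}, bounds the corresponding time integral by $Ct/n$, which on $[0,T]$ is $\le C_T t^{3/2}$ (it is $\le t^{3/2}$ once $t\ge n^{-2}$, and for $t\le n^{-2}$ the trivial bound $Ct^2n$ is already $\le t^{3/2}$).

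It remains to estimate the block-average products $B_j$ produced by the Boltzmann--Gibbs step. Here I use the crude $L^2$-in-time bound: Cauchy--Schwarz in $s$ together with stationarity gives $\E_n[\,|\int_0^t\sum_j B_j(sn^2)\psi_j\,ds|^2\,]\le t^2\,\E_{\mu_{K,n}}[\,|\sum_j B_j\psi_j|^2\,]$, and a Wick computation shows that each $B_j$ has mean zero, variance $O(\ell^{-2})$, and covariances $\mathrm{Cov}_{\mu_{K,n}}(B_j,B_{j'})$ supported on $|j-j'|\lesssim\ell$ and summing to $O(\ell^{-1})$ there, whence $\E_{\mu_{K,n}}[\,|\sum_j B_j\psi_j|^2\,]\le (Cn/\ell)\,\mathcal{E}_n(\psi)\le Cn/\ell$ and this contribution is $\le Ct^2n/\ell$. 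Adding the two pieces, each centered quadratic monomial contributes at most $C(t\ell/n+t^2n/\ell)$, uniformly in $n$; choosing $\ell=\lfloor n\sqrt t\rfloor$ (truncated to the admissible range, roughly $1\le\ell\lesssim n$) makes both terms of order $t^{3/2}$, while in the regime $t\le n^{-2}$ one takes $\ell=1$, where the Boltzmann--Gibbs error vanishes because the scale-$1$ averages reproduce the monomial and $Ct^2n\le Ct^{3/2}$, and in the regime $t\gtrsim1$ (relevant only if $T>1$) one takes $\ell$ of order $n$, where $t\ell/n+t^2n/\ell\lesssim t+t^2\le C_T t^{3/2}$. Summing over the finitely many ($O(K^2)$) monomials gives $\E_n[\,|\mathcal{B}_{k,t}(\varphi)|^2\,]\le C_T t^{3/2}$, and since every monomial was estimated on its own, so is each term of $\mathcal{B}_{k,t}(\varphi)$.

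The only genuinely non-routine point, and the one where the work concentrates, is that the exponent is $3/2$ rather than the exponent $1$ that a direct use of Kipnis--Varadhan, or of a Boltzmann--Gibbs principle at a fixed scale, would give: one must let the averaging scale $\ell$ depend on $t$ and balance the Boltzmann--Gibbs error, which grows with $\ell$, against the static variance of the block average, which decays with $\ell$. The rest is bookkeeping --- splitting the $u^2$-monomials and the off-diagonal cross-layer monomials into bounded-times-gradient remainders so that the stated Boltzmann--Gibbs principles apply, and tracking the index shifts in the sums over $j$ so that the test sequence fed into each estimate keeps $\mathcal{E}_n$ bounded.
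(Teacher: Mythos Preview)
Your proposal is correct and follows essentially the same approach as the paper: reduce each quadratic monomial to one of the near-diagonal forms $u_{a,j}u_{a,j+1}$ or $u_{a,j}u_{b,j}$ via a gradient remainder controlled by Kipnis--Varadhan (the paper isolates this as Lemma~\ref{lemma532} for the $u^{2}$ terms), then apply the relevant second-order Boltzmann--Gibbs principle to pass to block averages at cost $t\ell/n$, combine with the static $L^{2}$ bound $t^{2}n/\ell$ on the block-average product, and optimize $\ell\sim n\sqrt{t}$, treating the regime $tn^{2}\le 1$ by the crude $L^{2}$ bound. Your monomial-by-monomial bookkeeping is slightly more explicit than the paper's grouping into $\textbf{W},\textbf{B},\textbf{R},\textbf{P}$, but the argument is the same.
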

The rest of this section is devoted to the proof of this proposition. 
Recall that
\begin{align*}
    - \mathcal{B}_{k,t}^{n}(\varphi) 
    &= \int_{0}^{t}  \sum_{j \in \Z_{M}} G_{k,j}(sn^2) \nabla^{n} \varphi_{j}^{n} ds 
    \\
    & =  \alpha_{k} \underbrace{\int_{0}^{t} \sum_{j \in \Z_{M}} w_{k,j}(sn^2) \nabla^{n} \varphi_{j}^{n} ds}_{:=\textbf{W}_{k,t}^{n}(\varphi)} 
    + 
    \sum_{q \neq 0} \beta_{k}^{q} \underbrace{\int_{0}^{t} \sum_{j \in \Z_{M}} b_{k,j}^{q}(sn^2) \nabla^{n} \varphi_{j}^{n} ds}_{:=\textbf{B}_{k,t}^{n,q}(\varphi)}
    \\
    & \quad 
    + 
    \sum_{q \neq 0} \gamma_{k}^{q} \underbrace{ \int_{0}^{t} \sum_{j \in \Z_{M}} r_{k,j}^{q}(sn^2) \nabla^{n} \varphi_{j}^{n} ds}_{:=\textbf{R}_{k,t}^{n,q}(\varphi)} 
    + 
    \sum_{q \neq 0} \sum_{\substack{q' \neq 0 \\ q' \neq q}} \lambda_{k}^{k-q,k-q'} \underbrace{ \int_{0}^{t} \sum_{j \in \Z_{M}} p_{k,j}^{q,q'}(sn^2) \nabla^{n} \varphi_{j}^{n} ds}_{:=\textbf{P}_{k,t}^{n,q,q'}(\varphi)}.
\end{align*}
We begin with a lemma which will allow us to switch from terms involving $u_{k,j}^{2}$ to terms involving $u_{k,j} u_{k,j+1}$, to which we can apply the second-order Boltzmann-Gibbs principle. This will not be needed for products of terms depending on different components of the process.
\begin{lemma}
\label{lemma532}
Let
\begin{equation}
\label{YktWithOne}
    Y_{k,t}^{n} (\varphi) = \int_{0}^{t} \sum_{j \in \Z_{M}} \varphi_{j} \big\{u_{k,j}(sn^2) u_{k,j+1}(sn^2) - u_{k,j}^{2}(sn^2) +1 \big\} ds.
\end{equation}
There exists a finite constant $C>0$ such that
\begin{eqnarray}\label{eq:remainderY}
	\E_{n}\left[\left|
		Y_{k,t}^{n} (\varphi)
	\right|^2\right]
	\leq
	\frac{C t}{n} \mathcal{E}_{n}(\varphi).
\end{eqnarray}
In particular, $Y_{k,t}^{n} (\varphi)$ goes to zero in the ucp topology.
\end{lemma}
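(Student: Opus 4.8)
The plan is to use the Kipnis--Varadhan inequality to reduce the estimate to a static bound on the $\norm{\cdot}_{-1,n}$-norm of the relevant function. Concretely, set $F(u) = \sum_{j \in \Z_M} \varphi_j\{u_{k,j}u_{k,j+1} - u_{k,j}^2 + 1\}$, so that $Y_{k,t}^n(\varphi) = \int_0^t F(u(sn^2))\,ds$. Note that $F$ has zero mean with respect to $\mu_{K,M}$, since $\E_{\mu}[u_{k,j}u_{k,j+1}] = 0$ and $\E_{\mu}[u_{k,j}^2] = 1$. By Kipnis--Varadhan, $\E_n[\,|Y_{k,t}^n(\varphi)|^2\,] \leq C t \norm{F}_{-1,n}^2$, so it suffices to show $\norm{F}_{-1,n}^2 \leq \frac{C}{n}\mathcal{E}_n(\varphi)$.

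To bound $\norm{F}_{-1,n}$, I would first rewrite $F$ as a discrete gradient applied to $u$. Observe that $u_{k,j}u_{k,j+1} - u_{k,j}^2 = u_{k,j}(u_{k,j+1} - u_{k,j})$, which is an antisymmetrized version of $u_{k,j}(u_{k,j+1}-u_{k,j})$; a cleaner route is to symmetrize in $j$. Writing $u_{k,j}u_{k,j+1} - u_{k,j}^2 + 1 = -\tfrac12(u_{k,j+1}-u_{k,j})^2 + \tfrac12(u_{k,j+1}^2 - u_{k,j}^2) + 1$ and using that $\tfrac12(u_{k,j+1}^2-u_{k,j}^2)$ telescopes when paired against $\varphi_j$ (producing a term with a discrete gradient of $\varphi$), one reduces the bulk of $F$ to $\sum_j \varphi_j\{-\tfrac12(u_{k,j+1}-u_{k,j})^2 + 1\}$ plus a manifestly small remainder. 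The key observation is that $(\partial_{k,j+1}-\partial_{k,j})\big[\tfrac12(u_{k,j+1}-u_{k,j})^2\big] = u_{k,j+1}-u_{k,j}$, which makes the leading piece amenable to the standard Gaussian integration-by-parts trick used in the one-block estimates: for any $f \in \mathscr{C}$, $2\int F f\,d\mu_{K,M}$ becomes a sum over $j$ of terms of the form $2\int H_j (\partial_{k,j+1}-\partial_{k,j})f\,d\mu_{K,M}$, to which one applies weighted Young's inequality with weight $\alpha = 2/n^2$, obtaining $\tfrac{2}{n^2}\sum_j \int H_j^2\,d\mu_{K,M} - \tfrac{n^2}{2}D_{M,K}(f)$. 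The Dirichlet-form term is exactly $-\tfrac{n^2}{2}D_{M,K}(f)$, which matches the second term in the variational formula for $\norm{F}_{-1,n}$, so the supremum over $f$ gives $\norm{F}_{-1,n}^2 \leq \tfrac{2}{n^2}\sum_j \int H_j^2\,d\mu_{K,M}$. Since $H_j$ involves only $\varphi_j$ (and possibly $\varphi_{j\pm1}$) times a bounded-variance polynomial in $u$, the sum is $O(\tfrac{1}{n^2}\sum_j \varphi_j^2) = O(\tfrac{1}{n}\mathcal{E}_n(\varphi))$, as desired.

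I expect the main technical obstacle to be the bookkeeping around the telescoping/summation-by-parts rearrangement of $F$: one must carefully track the term $\tfrac12(u_{k,j+1}^2 - u_{k,j}^2)$ so that, after summation by parts, it contributes $\tfrac12\sum_j u_{k,j}^2(\varphi_{j-1}-\varphi_j)$, which does \emph{not} obviously have an extra $1/n$ factor unless one exploits that $\varphi_{j-1}-\varphi_j = \tfrac1n \nabla^n\varphi^n_{j-1}$ together with a further antisymmetrization. In fact a slicker path avoids introducing $\tfrac12(u_{k,j+1}^2-u_{k,j}^2)$ at all: directly write $u_{k,j}u_{k,j+1}-u_{k,j}^2+1$ as $-\tfrac12(u_{k,j+1}-u_{k,j})^2+1 + \tfrac12(u_{k,j+1}-u_{k,j})(u_{k,j+1}+u_{k,j})$, handle the first two terms by the gradient trick above (noting $\E_\mu[(u_{k,j+1}-u_{k,j})^2] = 2$ makes $-\tfrac12(\cdot)^2+1$ mean-zero), and handle the last term, which equals $\tfrac12(\partial_{k,j+1}-\partial_{k,j})\big[\tfrac12(u_{k,j+1}^2+u_{k,j}^2)\cdot(\text{something})\big]$ — or more simply, recognize $\tfrac12(u_{k,j+1}-u_{k,j})(u_{k,j+1}+u_{k,j})$ paired with $\varphi_j$ and summed over $j$ telescopes after using $\partial$-IBP to produce a discrete gradient of $\varphi$, giving the $1/n$ gain. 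Either way, the structure is: rewrite $F$ so every summand is a discrete difference operator hitting a polynomial, apply Gaussian IBP plus weighted Young, match the Dirichlet form, and count powers of $n$. The final sentence of the lemma, convergence to zero in the ucp topology, is then immediate: Markov's inequality plus \eqref{eq:remainderY} and $\mathcal{E}_n(\varphi) \to \mathcal{E}(\varphi) < \infty$ gives $\sup_{0\le t\le T}|Y_{k,t}^n(\varphi)| \to 0$ in probability (the supremum already being controlled inside the Kipnis--Varadhan bound).
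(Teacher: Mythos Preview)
Your overall strategy (Kipnis--Varadhan, Gaussian integration by parts, weighted Young with $\alpha = 2/n^2$) is exactly the paper's, and your argument is correct. But you take an unnecessary detour. The paper uses precisely the factorization you write down first and then discard: $u_{k,j}u_{k,j+1} - u_{k,j}^2 = u_{k,j}(u_{k,j+1}-u_{k,j})$. Applying Gaussian IBP directly to the factor $(u_{k,j+1}-u_{k,j})$ in $\int \varphi_j\, u_{k,j}(u_{k,j+1}-u_{k,j}) f\, d\mu_{K,M}$ gives
\[
\int \varphi_j\,(\partial_{k,j+1}-\partial_{k,j})(u_{k,j}f)\, d\mu_{K,M}
= \int \varphi_j\big\{ u_{k,j}(\partial_{k,j+1}-\partial_{k,j})f - f\big\}\, d\mu_{K,M},
\]
and the $-f$ term is exactly cancelled by the $+1$ in the definition of $Y$. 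One is left with $\sum_j \int \varphi_j u_{k,j}(\partial_{k,j+1}-\partial_{k,j})f\, d\mu_{K,M}$, to which weighted Young applies immediately with $H_j = \varphi_j u_{k,j}$, giving $\norm{F}_{-1,n}^2 \le \tfrac{2}{n}\mathcal{E}_n(\varphi)$ in one line.

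Your ``symmetrization'' route, splitting into $-\tfrac12(u_{k,j+1}-u_{k,j})^2+1$ and $\tfrac12(u_{k,j+1}-u_{k,j})(u_{k,j+1}+u_{k,j})$, does work --- if in \emph{both} pieces you apply IBP to the factor $(u_{k,j+1}-u_{k,j})$ rather than trying to sum by parts in $j$ --- but then the two pieces recombine to give exactly $\varphi_j u_{k,j}(\partial_{k,j+1}-\partial_{k,j})f$, i.e.\ the paper's identity. The summation-by-parts route you worry about is a genuine dead end: after $\sum_j \varphi_j \tfrac12(u_{k,j+1}^2-u_{k,j}^2) = \tfrac12\sum_j(\varphi_{j-1}-\varphi_j)(u_{k,j}^2-1)$ you are left with $u_{k,j}^2-1$, whose IBP produces $\partial_{k,j}f$ rather than the discrete gradient $(\partial_{k,j+1}-\partial_{k,j})f$ that the Dirichlet form controls, so no clean $\norm{\cdot}_{-1,n}$ bound follows that way. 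In short: trust your first instinct.
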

\begin{proof}
Using integration by parts,
\begin{align*}
    \int_{0}^{t} \sum_{j \in \Z_{M}} \varphi_{j} ( u_{k,j} u_{k,j+1} - u_{k,j}^{2}) f d\mu & = \int_{0}^{t} \sum_{j \in \Z_{M}} \varphi_{j} (u_{k,j+1} - u_{k,j}) u_{k,j} f d\mu \\
    & = \int_{0}^{t} \sum_{j \in \Z_{M}} \varphi_{j} (\partial_{k,j+1} - \partial_{k,j}) (u_{k,j} f) d\mu \\
    & = \int_{0}^{t} \sum_{j \in \Z_{M}} \varphi_{j} \{u_{k,j} ( \partial_{k,j+1} - \partial_{k,j})f - f\} d\mu.
\end{align*}
Hence,
\begin{equation*}
     \int_{0}^{t} \sum_{j \in \Z_{M}} \varphi_{j} \big\{u_{k,j} u_{k,j+1} - u_{k,j}^{2} +1 \big\} f d\mu = \int_{0}^{t} \sum_{j \in \Z_{M}} \varphi_{j} u_{k,j} ( \partial_{k,j+1} - \partial_{k,j})f  d\mu.
\end{equation*}
By Young's inequality,
\begin{align*}
    2 \int_{0}^{t} \sum_{j \in \Z_{M}} \varphi_{j} \big\{u_{k,j} u_{k,j+1} - u_{k,j}^{2} +1 \big\} f d\mu & = 2 \int_{0}^{t} \sum_{j \in \Z_{M}} \varphi_{j} u_{k,j} ( \partial_{k,j+1} - \partial_{k,j})f  d\mu \\
    & \leq \int_{0}^{t} \sum_{j \in \Z_{M}} \big\{ \alpha \varphi^{2} u_{k,j}^{2} + \frac{1}{\alpha}((\partial_{k,j+1} - \partial_{k,j})f)^{2} \big\} d\mu \\
    & = \int_{0}^{t} \sum_{j \in \Z_{M}} \big\{ \frac{2}{n^2} \varphi^{2} u_{k,j}^{2} + \frac{n^2}{2}((\partial_{k,j+1} - \partial_{k,j})f)^{2} \big\} d\mu \\
    & = \frac{2}{n} \mathcal{E}_{n}(\varphi) + \frac{n^2}{2} \sum_{j \in \Z_{M}} \int_{0}^{t}((\partial_{k,j+1} - \partial_{k,j})f)^{2} d\mu,
\end{align*}
by taking $\alpha = \frac{2}{n^2}$ and using that $u_{k,j} \sim \mathcal{N}(0,1)$. 
The estimate \eqref{eq:remainderY} then follows from the Kipnis-Varadhan estimate (Section \ref{kipnis-varadhan}).
We then obtain the convergence to $0$ in ucp topology by Chebyshev's inequality.
\end{proof}
This means that we can switch the term $w_{k,j}$ in the anti-symmetric part of the dynamics
by $u_{k,j} u_{k,j+1}$ modulo a vanishing term. Note that, as we apply the previous lemma to a gradient, the constant term $1$ in \eqref{YktWithOne} will disappear. Hence, to handle the term $\textbf{W}_{k,t}^{n}(\varphi)$ it is enough to prove the tightness of
\begin{equation*}
    \widetilde{\textbf{W}}_{k,t}^{n}(\varphi) := \int_{0}^{t} \sum_{j \in \Z_{M}} u_{k,j}(sn^2) u_{k,j+1}(sn^2) \nabla^{n} \varphi_{j}^{n} ds.
\end{equation*}
From the Boltzmann-Gibbs Principle (Proposition \ref{thm:BoltzmannGibbs}), we have that
\begin{equation}\label{eq:W-first-estimate}
    \E_{n} \Bigg[ \Bigg| \widetilde{\textbf{W}}_{k,t}^{n}(\varphi) - \int_{0}^{t} \sum_{j \in \Z_{M}} \overleftarrow{u}_{k,j}^{l}(sn^2) \overrightarrow{u}_{k,j}^{l}(sn^2) \nabla^{n} \varphi_{j}^{n} ds \Bigg|^{2} \Bigg] 
    \leq 
    C \frac{tl}{n} \mathcal{E}_{n}(\nabla^{n} \varphi^{n}).
\end{equation}
On the other hand, by Jensen's inequality,
\begin{align*}
    & \E_{n} \Bigg[ \Bigg| \int_{0}^{t} \sum_{j \in \Z_{M}} \overleftarrow{u}_{k,j}^{l}(sn^2) \overrightarrow{u}_{k,j}^{l}(sn^2) \nabla^{n} \varphi_{j}^{n} ds \Bigg|^{2} \Bigg] 
   \leq 
   t^{2} \E_{n} \Bigg[ \Bigg| \sum_{j \in \Z_{M}} \overleftarrow{u}_{k,j}^{l}(0) \overrightarrow{u}_{k,j}^{l}(0)  \nabla^{n} \varphi_{j}^{n} \Bigg|^{2}  \Bigg].
\end{align*}
Next, as in the proof of Lemma \ref{thm:one-block-forward},
a careful $L^{2}$ computation, taking dependencies into account, shows that
\begin{eqnarray*}
	\E_{n} \Bigg[ \Bigg| \sum_{j \in \Z_{M}} \overleftarrow{u}_{k,j}^{l}(0) \overrightarrow{u}_{k,j}^{l}(0)  \nabla^{n} \varphi_{j}^{n} \Bigg|^{2}  \Bigg]
	\leq
	\frac{n}{l}\mathcal{E}_{n}(\nabla^{n} \varphi^{n}).
\end{eqnarray*}
Hence,
\begin{align}\label{eq:W-second-estimate}
    & \E_{n} \Bigg[ \Bigg| \int_{0}^{t} \sum_{j \in \Z_{M}} \overleftarrow{u}_{k,j}^{l}(sn^2) \overrightarrow{u}_{k,j}^{l}(sn^2) \nabla^{n} \varphi_{j}^{n} ds \Bigg|^{2} \Bigg] 
   \leq 
   \frac{t^2 n}{l} \mathcal{E}_{n}(\nabla^{n} \varphi^{n}).
\end{align}
Combining \eqref{eq:W-first-estimate} and \eqref{eq:W-second-estimate}, we obtain
\begin{eqnarray*}
	 \E_{n} \Bigg[ \Bigg| \widetilde{\textbf{W}}_{k,t}^{n}(\varphi)  \Bigg|^{2} \Bigg] 
	 \leq
	 C \left(
	 	\frac{tl}{n}+\frac{t^2 n}{l}
	 \right),
\end{eqnarray*}
for some finite constant $C=C(\varphi)>0$. If $tn^2 \geq 1$, we choose $l \sim \sqrt{t}n$, which yields
\begin{eqnarray*}
	 \E_{n} \Bigg[ \Bigg| \widetilde{\textbf{W}}_{k,t}^{n}(\varphi)  \Bigg|^{2} \Bigg] 
	 \leq
	 C t^{3/2}.
\end{eqnarray*}
For $tn^2 \leq 1$, a crude $L^{2}$ bound gives
\begin{eqnarray*}
	\E_{n} \Bigg[ \Bigg| \widetilde{\textbf{W}}_{k,t}^{n}(\varphi)  \Bigg|^{2} \Bigg]
	&=&
	\E_{n} \Bigg[ \Bigg| \int_{0}^{t} \sum_{j \in \Z_{M}} u_{k,j}(sn) u_{k,j+1}(sn) \nabla^{n} \varphi_{j}^{n} ds \Bigg|^{2} \Bigg] 
	\\
	&\leq&
	C t^2 n \mathcal{E}_{n}(\nabla^{n} \varphi_{j}^{n})
	\leq
	C t^{3/2} \mathcal{E}_{n}(\nabla^{n} \varphi_{j}^{n}).
\end{eqnarray*}
This shows that $(\widetilde{\textbf{W}}_{k,t}^{n}(\varphi))_n$ is tight. 

We now consider the terms $\textbf{R}_{k,t}^{n,q}$.
We use the Boltzmann-Gibbs Principle (Proposition \ref{thm:BoltzmannGibbs}) once again to obtain
\begin{align*}
    & \E \Bigg[ \Bigg| \int_{0}^{t} \sum_{j \in \Z_{M}} u_{k-q,j}(sn^2) u_{k-q,j+1}(sn^2) \nabla^{n} \varphi_{j}^{n} ds - \int_{0}^{t} \sum_{j \in \Z_{M}} \overleftarrow{u}_{k-q,j}^{l}(sn^2) \overrightarrow{u}_{k,j}^{l}(sn^2) \nabla^{n} \varphi_{j}^{n} ds \Bigg|^{2} \Bigg] \\
    & \leq 
    C \frac{tl}{n} \mathcal{E}_{n}(\nabla^{n} \varphi^{n})
\end{align*}
and follow the same process as before. This yields the tightness of $(\textbf{R}_{k,t}^{n,q}(\varphi))_n$ for every $q \neq 0$. 

For the terms $\textbf{B}_{k,t}^{n,q}(\varphi)$ and $\textbf{P}_{k,t}^{n,q,q'}(\varphi)$, we use the Boltzmann-Gibbs Principle for crossed terms (Proposition \ref{thm:BoltzmannGibbsCrossed}) to obtain
\begin{equation*}
    \E \Bigg[ \Bigg| \int_{0}^{t} ds \sum_{j \in \Z_{M}} [u_{k,j}(sn^2) u_{\bar{k},j}(sn^2) -\overrightarrow{u}_{k,j-1}^{l}(sn^2) \overrightarrow{u}_{\bar{k},j-1}^{l}(sn^2) ] \nabla^{n} \varphi_{j}^{n} \Bigg|^{2} \Bigg] 
    \leq 
    C \frac{tl}{n} \mathcal{E}_{n}(\nabla^{n} \varphi^{n}),
\end{equation*}
for all $k\neq k'$.
Noticing that
\begin{equation*}
    \E \Bigg[ \Bigg| \int_{0}^{t} \sum_{j \in \Z_{M}} \overrightarrow{u}_{k,j-1}^{l}(sn^2) \overrightarrow{u}_{\bar{k},j-1}^{l}(sn^2) \nabla^{n} \varphi_{j}^{n} \Bigg|^{2} \Bigg] 
    \leq 
    \frac{t^{2}n}{l} \mathcal{E}_{n}(\nabla^{n} \varphi^{n}),
\end{equation*}
we conclude that
\begin{equation*}
    \E \Bigg[ \Bigg| \int_{0}^{t} \sum_{j \in \Z_{M}} u_{k,j}(sn^2) u_{\bar{k},j}(sn^2) \nabla^{n} \varphi_{j}^{n} ds \Bigg|^{2} \Bigg] 
    \leq 
    C \Bigg\{ \frac{tl}{n} + \frac{t^{2} n}{l} \Bigg\},
\end{equation*}
from which we get the tightness of $(\textbf{B}_{k,t}^{n,q}(\varphi))_n$ and $(\textbf{P}_{k,t}^{n,q,q'}(\varphi))_n$ following the same process we used for $\textbf{W}_{k,t}^{n}(\varphi)$. Finally, the tightness of each of its components yields tightness for $\mathcal{B}_{k,t}^{n}(\varphi)$.


\section{Identification of the limit}
\label{sec:identification}


From Section \ref{sec:tightness}, we get processes $\mathcal{S}$, $\textbf{W}$, $\textbf{R}$, $\textbf{B}$, $\textbf{P}$ and $\mathcal{M}$ such that
\begin{align*}
    \lim_{n \to \infty} \mathcal{M}_{k}^{n} & = \mathcal{M}_{k}, & \lim_{n \to \infty} \mathcal{S}_{k}^{n} & = \mathcal{S}_{k}, & \lim_{n \to \infty} \textbf{W}_{k}^{n} & = \textbf{W}_{k} \\
    \lim_{n \to \infty} \textbf{R}_{k}^{n,q} & = \textbf{R}_{k}^{q}, & \lim_{n \to \infty} \textbf{B}_{k}^{n,q} & = \textbf{B}_{k}^{q}, & \lim_{n \to \infty} \textbf{P}_{k}^{n,q,q'} & = \textbf{P}_{k}^{q,q'},
\end{align*}
along a subsequence that we still denote by $n$. We will now identify these limiting processes. By the convergence of these processes, we obtain that $\lim_{n \to \infty} \mathcal{X}_{k}^{n} = \mathcal{X}_{k}$, where $\mathcal{X}_{k}$ is a weighted sum of the previous processes.
Additionally, by Lemma \ref{lemma532}, $\lim_{n \to \infty} \widetilde{\textbf{W}}_{k}^{n} = \textbf{W}_{k}$.


\subsection{Convergence at fixed times}


We will show that $\mathcal{X}_{k,t}^{n}$ converges to a white noise for each fixed time $t \in [0, T]$ and each $k \in \Z_{K}$. Let $\varphi \in \mathcal{S}(\T)$. Recall that, for each fixed time $t \in [0, T]$, $(u_{k,j})_{k,j}$ is an i.i.d. family of standard Gaussian random variables. Hence,
\begin{align*}
    \lim_{n\to\infty}
    \E_{n} \Big[ \exp\left(
    		i \lambda \mathcal{X}_{k,t}^{n}(\varphi)
    	\right) \Big] 
    	& = 
    	\lim_{n\to\infty}
    	\E_{n} \left[ \exp\left(
    		i \lambda \frac{1}{n^{\frac{1}{2}}} \sum_{j \in \Z_{M}} u_{k,j}(tn) \varphi_{j}^{n}
    	\right) \right] 
    	\\
    & =
     \lim_{n\to\infty}
     \exp\left(
     	- \frac{\lambda^2}{2n}\sum_{j \in \Z_{M}} (\varphi_{j}^{n})^2
     \right) 
     \\
    & = 
     \lim_{n\to\infty}
     \exp\left(
     	- \frac{\lambda^{2}}{2} \mathcal{E}_{n}(\varphi)
     \right)
     =
     \exp\left(
     	- \frac{\lambda^{2}}{2} \int \varphi(x)^2 dx
     \right).
\end{align*}
Hence, $\mathcal{X}_{k,t}^{n}$ converges in distribution to a white noise. This in turns proves that $\mathcal{X}$ satisfies property (S).


\subsection{Martingale term} 


The quadratic variation of the martingale part satisfies
\begin{equation*}
    \lim_{n \to \infty} \langle \mathcal{M}_{k,\cdot}^{n}(\varphi) \rangle_{t} = \lim_{n \to \infty} t \mathcal{E}_{n} (\nabla^{n} \varphi^{n}) = t \int_{\T} (\partial_{x} \varphi(x))^{2}dx = t \left\|  \partial_{x} \varphi \right\|_{L^{2}}^{2},
\end{equation*}
for all $t \geq 0$ and $\varphi \in \mathcal{S}(\T)$. 
By a criterion of Aldous \cite{Aldous}, this implies that $(\mathcal{M}_{k,\cdot}^{n}(\varphi))_{n}$ converges to a Brownian motion with variance $\left\|  \partial_{x} \varphi  \right\|_{L^{2}}^{2}$. Hence, $(\mathcal{M}_{k,\cdot}^{n})_{n}$ converges in distribution to the derivative of a white noise.


\subsection{Symmetric term}


A second moment bound and a simple Taylor expansion show that
\begin{equation*}
   \E \Bigg[ \Bigg| \mathcal{S}_{k,t}^{n}(\varphi) - \frac{1}{2} \int_{0}^{t} \mathcal{X}_{k,s}^{n}(\partial_{x}^{2} \varphi)ds \Bigg|^{2} \Bigg] \leq \frac{Ct^{2}}{n^{2}},
\end{equation*}
for some finite constant $C=C(\varphi)>0$.
Together with the convergence of the fluctuation field, this shows that 
\begin{equation*}
    \mathcal{S}_{k}(\varphi) = \lim_{n \to \infty} \mathcal{S}_{k}^{n}(\varphi) = \frac{1}{2} \int_{0}^{\cdot}  \mathcal{X}_{k,s}(\partial_{x}^{2} \varphi)ds.
\end{equation*}


\subsection{Anti-symmetric term}

We will identify the limit of each term separately.

\subsubsection{Limit of $\textbf{W}_{k}^{n}$}


For the term $\textbf{W}_{k}^{n}$, it suffices to identify the limit of the process $\widetilde{\textbf{W}}_{k,\cdot}^{n}(\varphi)$. 
Remember that $\overleftarrow{\iota_{\varepsilon}}(x) = \mathds{1}_{(x-\varepsilon,x]}$ and $\overrightarrow{\iota_{\varepsilon}}(x) = \mathds{1}_{[x,x+\varepsilon)}$. Hence,
\begin{align*}
    \mathcal{X}_{k,t}^{n}(\overleftarrow{\iota_{\varepsilon}}(\tfrac{j}{n})) 
    = 
    n^{\frac{1}{2}} \overleftarrow{u}_{k,j}^{l}(tn^2),
    \quad
    \text{and}
    \quad
    \mathcal{X}_{k,t}^{n}(\overrightarrow{\iota_{\varepsilon}}(\tfrac{j}{n})) 
    = 
    n^{\frac{1}{2}} \overrightarrow{u}_{k,j}^{l}(tn^2).
\end{align*}
Then, if $l=tn$, we have
\begin{align*}
    \int_{0}^{t} \sum_{j \in \Z_{M}} \overleftarrow{u}_{k,j}^{l}(sn^2) \overrightarrow{u}_{k,j}^{l}(sn) \nabla \varphi_{j}^{n} ds 
    & = 
    \int_{0}^{t} \frac{1}{n} \sum_{j \in \Z_{n}} 
    \mathcal{X}_{k,s}^{n}(\overleftarrow{\iota_{\varepsilon}}(\tfrac{j}{n})) 
    \mathcal{X}_{k,s}^{n}(\overrightarrow{\iota_{\varepsilon}}(\tfrac{j}{n})) \nabla \varphi_{j}^{n} ds 
    \\
    & \myrightarrow{n \to \infty} 
    \int_{0}^{t} \int_{\T} \mathcal{X}_{k,s}(\overleftarrow{\iota_{\varepsilon}}(x)) \mathcal{X}_{k,s}(\overrightarrow{\iota_{\varepsilon}}(x)) \partial_{x} \varphi(x) dx ds 
    \\
    & := \mathcal{A}_{0,t}^{\varepsilon,(k,k)}(\varphi).
\end{align*}
\begin{remark}
Given that $\overleftarrow{\iota_{\varepsilon}}(x),\overrightarrow{\iota_{\varepsilon}}(x) \not \in \mathcal{S}(\R)$, the limit does not follow immediately from the convergence of the field. However, it 
follows by a suitable approximation by $\mathcal{S}(\R)$ functions (see \cite{goncalvesJaraEnergysol}, Section 5.3 for details).
\end{remark}
Now, we show that $\mathcal{A}^{\varepsilon,(k,k)}(\varphi)$ satisfies the energy condition \eqref{conditionEC}. From the Boltzmann-Gibbs Principle (Proposition \ref{thm:BoltzmannGibbs}) and stationarity, we have that
\begin{equation*}
    \E \Bigg[ \Bigg| \widetilde{\textbf{W}}_{k,t}^{n}(\varphi)-\widetilde{\textbf{W}}_{k,s}^{n}(\varphi)  
    - 
    \int_{s}^{t} \sum_{j \in \Z_{n}} \overleftarrow{u}_{k,j}^{l}(sn^2) \overrightarrow{u}_{k,j}^{l}(sn^2) \nabla^{n} \varphi_{j}^{n} ds \Bigg|^{2} \Bigg] 
    \leq 
    C \frac{(t-s)l}{n} \mathcal{E}_{n}(\nabla^{n} \varphi^{n}).
\end{equation*}
Taking $l = \varepsilon n$, we obtain
\begin{equation*}
    \E \Bigg[ \Bigg| \widetilde{\textbf{W}}_{k,t}^{n}(\varphi)-\widetilde{\textbf{W}}_{k,s}^{n}(\varphi)  - \int_{s}^{t} \sum_{j \in \Z_{M}} \overleftarrow{u}_{k,j}^{l}(sn) \overrightarrow{u}_{k,j}^{l}(sn) \nabla^{n} \varphi_{j}^{n} ds \Bigg|^{2} \Bigg] \leq C (t-s) \varepsilon,
\end{equation*}
from some finite constant $C=C(\varphi)>0$.
Therefore, by Fatou's lemma,
\begin{align}\label{eq:W-intermediate}
    \E \Bigg[ \Bigg| \textbf{W}_{k,t}(\varphi)-\textbf{W}_{k,s}(\varphi)  - \mathcal{A}_{s,t}^{\varepsilon,(k,k)}(\varphi) \Bigg|^{2} \Bigg] 
    \leq C (t-s) \varepsilon.
\end{align}
In the same way, if $0 < \delta < \varepsilon$,
\begin{equation*}
\E \Bigg[ \Bigg| \textbf{W}_{k,t}(\varphi)-\textbf{W}_{k,s}(\varphi)  - \mathcal{A}_{s,t}^{\delta,(k,k)}(\varphi) \Bigg|^{2} \Bigg] \leq C(t-s) \delta.
\end{equation*}
Collecting the last two estimates, we obtain
\begin{align*}
    \E \Bigg[ \Bigg| \mathcal{A}_{s,t}^{\varepsilon,(k,k)}(\varphi)  - \mathcal{A}_{s,t}^{\delta,(k,k)}(\varphi) \Bigg|^{2} \Bigg] 
    \leq C(t-s) \varepsilon,
\end{align*}
for all $0 < \delta < \varepsilon$.
This shows that $\mathcal{A}^{\varepsilon,(k,k)}$ satisfies the energy estimate \eqref{conditionEC} for all $k \in \Z_{K}$. Hence, by Theorem \ref{ThmAconvergence}, taking $\varepsilon \to 0$, $\mathcal{A}_{s,t}^{\varepsilon,(k,k)}(\varphi)$ converges in $L^{2}$ to a process we denote by $\mathcal{A}_{s,t}^{(k,k)}(\varphi)$. Finally, by \eqref{eq:W-intermediate} and Fatou's lemma,
\begin{align*}
     \E \Bigg[ \Bigg| \textbf{W}_{k,t}(\varphi)-\textbf{W}_{k,s}(\varphi) - \mathcal{A}_{s,t}^{(k,k)}(\varphi) \Bigg|^{2} \Bigg] 
     & \leq 
     \lim_{\varepsilon \to 0} \E \Bigg[ \Bigg| \textbf{W}_{k,t}(\varphi)-\textbf{W}_{k,s}(\varphi)  - \mathcal{A}_{s,t}^{\varepsilon,(k,k)}(\varphi) \Bigg|^{2} \Bigg] 
     = 0,
\end{align*}
concluding that the limit of the process $\textbf{W}_{k,\cdot}^{n}(\varphi)$ is $\mathcal{A}_{\cdot}^{(k,k)}(\varphi)$.


\subsubsection{Limit of $\textbf{B}_{k}^{n,q}$}


Recall that
\begin{equation*}
    \textbf{B}_{k,t}^{n,q}(\varphi) = \int_{0}^{t} \sum_{j \in \Z_{M}} b_{k,j}^{q}(sn^2) \nabla^{n} \varphi_{j}^{n} ds
\end{equation*}
where
\begin{equation*}
    b_{k,j}^{q} = \frac{1}{2} (u_{k,j}u_{k+q,j} + u_{k,j+1}u_{k+q,j+1}).
\end{equation*}
By a simple $L^2$ computation, we note that it is enough to identify the limit of the processes
\begin{equation*}
    U_{k,\bar{k}}^{t,n}(\varphi) = \int_{0}^{t} \sum_{j \in \Z_{M}} u_{k,j}(sn^2)u_{\bar{k},j}(sn^2) \nabla^{n} \varphi_{j}^{n} ds, 
\end{equation*}
for $k \neq \bar{k}$. We denote $U_{k,\bar{k}}^{t}(\varphi) = \lim_{n\to\infty} U_{k,\bar{k}}^{t,n}(\varphi)$. As above, it holds that
\begin{align*}
    \int_{0}^{t} \sum_{j \in \Z_{M}} \overrightarrow{u}_{k,j-1}^{l}(sn^2) \overrightarrow{u}_{\bar{k},j-1}^{l}(sn^2) \nabla^{n} \varphi_{j}^{n} ds 
    & = 
    \int_{0}^{t} \frac{1}{\sqrt{n}} \sum_{j \in \Z_{n}} 
    \mathcal{X}_{k,s}^{n}(\overrightarrow{\iota_{\varepsilon}}(\tfrac{j-1}{\sqrt{n}})) \mathcal{X}_{\bar{k},s}^{n}(\overrightarrow{\iota_{\varepsilon}}(\tfrac{j-1}{\sqrt{n}}))  \nabla^{n} \varphi_{j}^{n} ds 
    \\
    & \myrightarrow{n \to \infty} 
    \int_{0}^{t} \int_{\T}  \mathcal{X}_{k,s}(\overrightarrow{\iota_{\varepsilon}}(x)) \mathcal{X}_{\bar{k},s}(\overrightarrow{\iota_{\varepsilon}}(x))  \partial_{x} \varphi(x) dx ds
    \\
    & =: \mathcal{A}_{0,t}^{\varepsilon,(k,\bar{k})}(\varphi).
\end{align*}
From the Boltzman-Gibbs Principle for crossed terms (Proposition \ref{thm:BoltzmannGibbsCrossed}), we have that
\begin{equation*}
    \E \Bigg[ \Bigg| U_{k,\bar{k}}^{t,n}(\varphi) 
    - \int_{0}^{t} ds \sum_{j \in \Z_{n}} \overrightarrow{u}_{k,j-1}^{l}(sn^2) \overrightarrow{u}_{\bar{k},j-1}^{l}(sn^2) \nabla^{n} \varphi_{j}^{n} 
     \Bigg|^{2} \Bigg] 
     \leq 
     C \frac{tl}{n} \mathcal{E}_{n}(\nabla^{n} \varphi^{n}).
\end{equation*}
Hence, taking $l = \varepsilon n$, applying Fatou's lemma and using stationarity, we obtain that
\begin{align*}
    \E \Bigg[ \Bigg| U_{k,\bar{k}}^{t}(\varphi)-U_{k,k+q}^{s}(\varphi)  - \mathcal{A}_{s,t}^{\varepsilon,(k,\bar{k})}(\varphi) \Bigg|^{2} \Bigg] 
    \leq C (t-s) \varepsilon.
\end{align*}
As above, this shows that $\mathcal{A}_{s,t}^{\varepsilon,(k,\bar{k})}(\varphi)$ satisfies the energy condition. Hence, $\mathcal{A}_{s,t}^{(k,\bar{k})}(\varphi) = \lim_{\varepsilon\to0} \mathcal{A}_{s,t}^{\varepsilon,(k,\bar{k})}(\varphi)$ exists and, proceeding again as above, we conclude that $\lim_{n\to0} U^{t,n}_{k,\bar{k}} = \mathcal{A}_{s,t}^{(k,\bar{k})}(\varphi)$.

In particular, we conclude that the limit of the process $\textbf{B}_{k,\cdot}^{n,q}(\varphi)$ is $\mathcal{A}_{\cdot}^{(k,k+q)}(\varphi)$.


\subsubsection{Limit of $\textbf{R}_{k}^{n,q}$}


Recall that
\begin{equation*}
    \textbf{R}_{k,t}^{n,q}(\varphi) = \int_{0}^{t} \sum_{j \in \Z_{n}} r_{k,j}^{q}(sn^2) \nabla^{n} \varphi_{j}^{n} ds
\end{equation*}
where
\begin{equation*}
    r_{k,j}^{q} = u_{k-q,j}u_{k-q,j+1}.
\end{equation*}
Following the argument used to compute the limit of the term $\widetilde{\textbf{W}}_{k,\cdot}^{n}(\varphi)$, we obtain that the limit of the process $\textbf{R}_{k,\cdot}^{n,q}(\varphi)$ is $\mathcal{A}_{\cdot}^{(k-q,k-q)}(\varphi)$.


\subsubsection{Limit of $\textbf{P}_{k}^{n,q,q'}$}


Recall that
\begin{equation*}
    \textbf{P}_{k,t}^{n,q,q'}(\varphi) = \int_{0}^{t} \sum_{j \in \Z_{M}} p_{k,j}^{q,q'}(sn^2) \nabla^{n} \varphi_{j}^{n} ds
\end{equation*}
where
\begin{equation*}
    p_{k,j}^{q,q'} = \frac{1}{6} ( 2u_{k-q,j}u_{k-q',j} +u_{k-q,j}u_{k-q',j+1} + u_{k-q,j+1}u_{k-q',j} + 2u_{k-q,j+1}u_{k-q',j+1}).
\end{equation*}
Following the argument used to compute the limit of the term $\textbf{B}_{k,\cdot}^{n,q}(\varphi)$, we obtain that the limit of the process $\textbf{P}_{k,\cdot}^{n,q,q'}(\varphi)$ is $\mathcal{A}_{\cdot}^{(k-q,k-q')}(\varphi)$.

\begin{remark}
	As all our arguments apply to the reversed process with straightforward modification, this finishes the proof of Theorem \ref{mainResult}.
\end{remark}


\appendix


\section{Proof of Lemma \ref{thm:equivalence-trilinear} }
\label{sec:proof-trilinear}


\begin{proof}[Proof of Lemma \ref{thm:equivalence-trilinear}]
First, we show that conditions \eqref{betaGamma}-\eqref{lambda} imply the trilinear condition. 
The equations for $u_{k}$, $u_{k+a}$, $u_{k-a}$ are
\begin{eqnarray*}
   \partial_{t} u_{k} 
   &=& 
   \frac{1}{2}\partial_{x}^{2} u_{k} + \partial_{x} \big(\alpha_{k} u_{k}^{2} + \sum_{l \neq 0} \big\{ \beta_{k}^{l} u_{k} u_{k+l} + \gamma_{k}^{l} u_{k-l}^{2} + \sum_{\substack{l' \neq 0 \\ l' \neq l}} \lambda_{k}^{k-l,k-l'} u_{k-l}u_{k-l'} \big\} \big) + \partial_{x} \mathscr{W}_{k},
 \\
    \partial_{t} u_{k+a} 
    &=&  \frac{1}{2}\partial_{x}^{2} u_{k+a} + \partial_{x} \big(\alpha_{k+a} u_{k+a}^{2} + \sum_{l \neq 0} \big\{ \beta_{k+a}^{l} u_{k+a} u_{k+a+l} + \gamma_{k+a}^{l} u_{k+a-l}^{2} 
    \\
   && \phantom{blablablablabla}
   + \sum_{\substack{l' \neq 0 \\ l' \neq l}} \lambda_{k+a}^{k+a-l,k+a-l'} u_{k+a-l}u_{k+a-l'} \big\} \big) + \partial_{x} \mathscr{W}_{k+a},
   \\
   \\
   \\
	 \end{eqnarray*}
  \begin{eqnarray*}
    \partial_{t} u_{k-a} 
    &=& 
    \frac{1}{2}\partial_{x}^{2} u_{k-a} + \partial_{x} \big(\alpha_{k-a} u_{k-a}^{2} + \sum_{l \neq 0} \big\{ \beta_{k-a}^{l} u_{k-a} u_{k-a+l} + \gamma_{k-a}^{l} u_{k-a-l}^{2} \\
   && \phantom{blablablablabla}
   + \sum_{\substack{l' \neq 0 \\ l' \neq l}} \lambda_{k-a}^{k-a-l,k-a-l'} u_{k-a-l}u_{k-a-l'} \big\} \big) + \partial_{x} \mathscr{W}_{k-a}.
\end{eqnarray*}
Hence,
\begin{equation*}
    \Gamma_{k,k}^{k} = \alpha_{k}, \ \Gamma_{k+a,k+a}^{k+a} = \alpha_{k+a}, \ \Gamma_{k-a,k-a}^{k-a} = \alpha_{k-a},
\end{equation*}
\begin{equation*}
    \Gamma_{k,k+a}^{k} = \Gamma_{k+a,k}^{k} = \frac{\beta_{k}^{a}}{2} = \Gamma_{k,k}^{k+a} = \gamma_{k+a}^{a},
\end{equation*}
\begin{equation*}
    \Gamma_{k-a,k-a}^{k} = \gamma_{k}^{a} = \Gamma_{k-a,k}^{k-a} = \Gamma_{k,k-a}^{k-a} = \frac{\beta_{k-a}^{a}}{2},
\end{equation*}
\begin{equation*}
    \Gamma_{k-a,k-a'}^{k} = \lambda_{k}^{k-a,k-a'} = \Gamma_{k-a',k-a}^{k} = \lambda_{k}^{k-a',k-a} = \Gamma_{k,k-a'}^{k-a} = \lambda_{k-a}^{k,k-a'}.
\end{equation*}
The trilinear condition simply follows from the identity $\gamma_{k+a}^{a} = \frac{\beta_{k}^{a}}{2}$ for every $a,k \in \Z_{K}$.

Now, consider a system of coupled Burgers equations satisfying the trilinear condition,
\begin{equation*}
    \partial_{t} u_{k} = \frac{1}{2} \partial_{x}^{2} u_{k} + \sum_{i,j \in \Z_{K}} \Gamma_{i,j}^{k} \partial_{x} (u_{i} u_{j}) + \partial_{x} \mathscr{W}_{k}, \quad k \in \Z_{K},
\end{equation*}
\begin{equation*}
    \Gamma_{i,j}^{k} = \Gamma_{j,i}^{k} = \Gamma_{k,j}^{i} \quad \text{for all } i,j,k \in \Z_{K}.
\end{equation*}
We rewrite the equations for $u_{k}$, $u_{k+a}$, $u_{k-a}$ as
\begin{eqnarray*}
   \partial_{t} u_{k} 
   &=& 
   \frac{1}{2}\partial_{x}^{2} u_{k} + \partial_{x} \big(\Gamma_{k,k}^{k} u_{k}^{2} + \sum_{l \neq 0} \big\{ \Gamma_{k,k+l}^{k} u_{k} u_{k+l} + \Gamma_{k+l,k}^{k} u_{k} u_{k+l} 
   \\
   && \quad 
   + \Gamma_{k-l,k-l}^{k} u_{k-l}^{2} + \sum_{\substack{l' \neq 0 \\ l' \neq l}} \Gamma_{k-l,k-l'}^{k} u_{k-l}u_{k-l'} \big\} \big) + \partial_{x} \mathscr{W}_{k},
	\\
   \partial_{t} u_{k+a} 
   &=& 
   \frac{1}{2}\partial_{x}^{2} u_{k+a} + \partial_{x} \Big(\Gamma_{k+a,k+a}^{k+a} u_{k+a}^{2} + \sum_{l \neq 0} \Big\{ \Gamma_{k+a,k+a+l}^{k+a} u_{k+a} u_{k+a+l} 
   \\
   && \quad 
   + \Gamma_{k+a+l,k+a}^{k+a} u_{k+a} u_{k+a+l} + \Gamma_{k+a-l,k+a-l}^{k} u_{k+a-l}^{2}  
   \\
   && \quad + \sum_{\substack{l' \neq 0 \\ l' \neq l}} \Gamma_{k+a-l,k+a-l'}^{k+a} u_{k+a-l}u_{k+a-l'} \Big\} \Big) + \partial_{x} \mathscr{W}_{k+a},
	\\
   \partial_{t} u_{k-a} 
   &=& 
   \frac{1}{2}\partial_{x}^{2} u_{k-a} + \partial_{x} \Big(\Gamma_{k-a,k-a}^{k-a} u_{k-a}^{2} + \sum_{l \neq 0} \Big\{ \Gamma_{k-a,k-a+l}^{k-a} u_{k-a} u_{k-a+l} 
   \\
   && \quad 
   + \Gamma_{k-a+l,k-a}^{k-a} u_{k-a} u_{k-a+l} + \Gamma_{k-a-l,k-a-l}^{k-a} u_{k-a-l}^{2} 
   \\
   && \quad 
   + \sum_{\substack{l' \neq 0 \\ l' \neq l}} \Gamma_{k-a-l,k-a-l'}^{k-a} u_{k-a-l}u_{k-a-l'} \Big\} \Big) + \partial_{x} \mathscr{W}_{k-a},
\end{eqnarray*}
which satisfies the way of writing the equations in Theorem \ref{mainResult} and also satisfies
\begin{equation*}
    \frac{\beta_{k}^{a}}{2} = \Gamma_{k,k+a}^{k} = \Gamma_{k+a,k}^{k} = \Gamma_{k,k}^{k+a} = \gamma_{k+a}^{a},
\end{equation*}
\begin{equation*}
    \lambda_{k}^{k-a,k-a'} = \Gamma_{k-a,k-a'}^{k} = \lambda_{k}^{k-a',k-a} = \Gamma_{k-a',k-a}^{k} = \lambda_{k-a}^{k,k-a'} = \Gamma_{k,k-a'}^{k-a}
\end{equation*}
which imply conditions \eqref{betaGamma}-\eqref{lambda}. \qedhere
\end{proof}


\section{Proof of Lemma \ref{ujBj-djBj=0} }
\label{sec:summation-lemma}


\begin{proof}[Proof of Lemma \ref{ujBj-djBj=0}]
First, we will prove that
\begin{align*}
    \partial_{k,j}B_{k,j}(u) &= \partial_{k,j} \Big(\alpha_{k} (w_{k,j} - w_{k,j-1}) + \sum_{l \neq 0} \beta_{k}^{l} (b_{k,j}^{l} - b_{k,j-1}^{l}) + \sum_{l \neq 0} \gamma_{k}^{l} (r_{k,j}^{l} - r_{k,j-1}^{l}) \\
    & \quad +\sum_{l \neq 0} \sum_{\substack{l' \neq 0 \\ l' \neq l}} \lambda_{k}^{k-l,k-l'} (p_{k,j}^{l,l'} - p_{k,j-1}^{l,l'}) \Big) = 0
\end{align*}
Recalling the definition of $w$, we can see that
\begin{align*}
        \sum_{j \in \Z_{M}} \partial_{k,j}(w_{k,j} - w_{k,j-1}) 
        = 
        \frac{1}{3} \sum_{j \in \Z_{M}} (u_{k,j+1} - u_{k,j-1})=0.
\end{align*}
Next, recalling the definitions of $b,\, r$ and $p$, we can see that
\begin{eqnarray*}
	\partial_{k,j} (b_{k,j}^{l} - b_{k,j-1}^{l})
	=
	\partial_{k,j} (r_{k,j}^{l} - r_{k,j-1}^{l})
	=
	\partial_{k,j}(p_{k,j}^{l,l'} - p_{k,j-1}^{l,l'}) 
	=
	0. 
\end{eqnarray*}
We are left to prove that
\begin{align*}
     \sum_{k \in \Z_{K}} \sum_{j \in \Z_{M}} u_{k,j} B_{k,j} &= \sum_{k \in \Z_{K}} \sum_{j \in \Z_{M}} u_{k,j} \Big(\alpha_{k} (w_{k,j} - w_{k,j-1}) + \sum_{l \neq 0} \beta_{k}^{l} (b_{k,j}^{l} - b_{k,j-1}^{l})  \\
    & \quad + \sum_{l \neq 0} \gamma_{k}^{l} (r_{k,j}^{l} - r_{k,j-1}^{l}) +\sum_{l \neq 0} \sum_{\substack{l' \neq 0 \\ l' \neq l}} \lambda_{k}^{k-l,k-l'} (p_{k,j}^{l,l'} - p_{k,j-1}^{l,l'}) \Big) = 0.
\end{align*}
We will divide the proof in several steps. In each one of them, we will highlight terms that produce telescopic sums.
\begin{itemize}
    \item $\displaystyle \sum_{j \in \Z_{M}} u_{k,j} (w_{k,j} - w_{k,j-1}) = 0 \ \forall k \in \Z_{K}$: by the definition of $w$,
    \begin{align*}
        u_{k,j}(w_{k,j} - w_{k,j-1}) & = \frac{1}{3}u_{k,j}(u_{k,j}^{2} + u_{k,j}u_{k,j+1} + u_{k,j+1}^{2}) \\
        & \quad - \frac{1}{3}u_{k,j}(u_{k,j-1}^{2} + u_{k,j-1}u_{k,j} + u_{k,j}^{2}) \\
        & = \frac{1}{3}(u_{k,j}^{2}u_{k,j+1} - u_{k,j-1}^{2}u_{k,j}) + \frac{1}{3}(u_{k,j}u_{k,j+1}^{2} - u_{k,j}^{2}u_{k,j-1}).
    \end{align*}
    Both summands yield telescopic sums when summing over $j \in \Z_{M}$.
    
    \vspace{2ex}
  	
  	
    \item $\displaystyle \sum_{k \in \Z_{K}} \sum_{j \in \Z_{M}} \Big( \sum_{l \neq 0} u_{k,j} \beta_{k}^{l} (b_{k,j}^{l} - b_{k,j-1}^{l}) + \sum_{l \neq 0} u_{k,j} \gamma_{k}^{l} (r_{k,j}^{l} - r_{k,j-1}^{l}) \Big) = 0$: first, by definition of $b$,
    \begin{align*}
        & u_{k,j}(\beta_{k}^{l}b_{k,j}^{l} - \beta_{k}^{l}b_{k,j-1}^{l}) \\
        & = u_{k,j}(\frac{\beta_{k}^{l}}{2} u_{k,j}u_{k+l,j} + \frac{\beta_{k}^{l}}{2} u_{k,j+1}u_{k+l,j+1}) - u_{k,j}(\frac{\beta_{k}^{l}}{2} u_{k,j-1}u_{k+l,j-1} + \frac{\beta_{k}^{l}}{2}u_{k,j}u_{k+l,j}) \\
        & = \underbrace{\frac{\beta_{k}^{l}}{2} u_{k,j}u_{k,j+1}u_{k+l,j+1}}_{1} - \underbrace{\frac{\beta_{k}^{l}}{2} u_{k,j} u_{k,j-1}u_{k+l,j-1}}_{2}.
    \end{align*}
    Next,
     \begin{align*}
        u_{k,j}(\gamma_{k}^{l}r_{k,j}^{l} - \gamma_{k}^{l}r_{k,j-1}^{l}) 
       =
       \underbrace{\gamma_{k}^{l} u_{k,j} u_{k-l,j}u_{k-l,j+1}}_{3} - \underbrace{ \gamma_{k}^{l} u_{k,j} u_{k-l,j-1}u_{k-l,j}}_{4}.
    \end{align*}
    Now, looking at the terms for $k+l$ instead of $k$, we obtain
    \begin{align*}
        u_{k+l,j}(\gamma_{k+l}^{l}r_{k+l,j}^{l} - \gamma_{k+l}^{l}r_{k+l,j-1}^{l}) 
        = 
        \underbrace{\gamma_{k+l}^{l} u_{k+l,j} u_{k,j}u_{k,j+1}}_{2} - \underbrace{ \gamma_{k+l}^{l} u_{k+l,j} u_{k,j-1}u_{k,j}}_{1}.
    \end{align*}
    Similarly for $k-l$, we obtain
    \begin{align*}
         u_{k-l,j}(\beta_{k-l}^{l}b_{k-l,j}^{l} - \beta_{k-l}^{l}b_{k-l,j-1}^{l}) 
         = 
         \underbrace{\frac{\beta_{k-l}^{l}}{2} u_{k,j+1} u_{k-l,j+1} u_{k-l,j}}_{4} - \underbrace{\frac{\beta_{k-l}^{l}}{2} u_{k,j-1} u_{k-l,j-1} u_{k-l,j}}_{3}.
    \end{align*}
    Terms marked with the same number underneath are telescopic when summing over $j \in \Z_{M}$ by recalling that $\frac{\beta_{k}^{l}}{2} = \gamma_{k+l}^{l}$.
    
    \vspace{2ex}
    

    \item $\displaystyle \sum_{k \in \Z_{K}} \sum_{j \in \Z_{M}} \sum_{l \neq 0} \sum_{\substack{l' \neq 0 \\ l' \neq l}} u_{k,j} \lambda_{k}^{k-l,k-l'} (p_{k,j}^{l,l'} - p_{k,j-1}^{l,l'}) = 0$: by definition of $p$,
    \begin{align*}
        & u_{k,j}(\lambda_{k}^{k-l,k-l'}p_{k,j}^{l,l'} - \lambda_{k}^{k-l,k-l'}p_{k,j-1}^{l,l'}) \nonumber \\
        & = u_{k,j} \Bigg( \frac{\lambda_{k}^{k-l,k-l'}}{6} (2u_{k-l,j}u_{k-l',j} + u_{k-l,j}u_{k-l',j+1} + u_{k-l,j+1}u_{k-l',j} + 2u_{k-l,j+1}u_{k-l',j+1}) \Bigg) \nonumber \\
        & \quad - u_{k,j} \Bigg( \frac{\lambda_{k}^{k-l,k-l'}}{6} (2u_{k-l,j-1}u_{k-l',j-1} +u_{k-l,j-1}u_{k-l',j} + u_{k-l,j}u_{k-l',j-1} + 2u_{k-l,j}u_{k-l',j}) \Bigg) \nonumber \\
    \end{align*}
    \begin{align*}
        & = u_{k,j} \Bigg( \frac{\lambda_{k}^{k-l,k-l'}}{6} (\underbrace{u_{k-l,j}u_{k-l',j+1} + u_{k-l,j+1}u_{k-l',j}}_{5} + \underbrace{2u_{k-l,j+1}u_{k-l',j+1}}_{6}) \Bigg) \\
        & \quad - u_{k,j} \Bigg( \frac{\lambda_{k}^{k-l,k-l'}}{6} (\underbrace{2u_{k-l,j-1}u_{k-l',j-1}}_{5} + \underbrace{u_{k-l,j-1}u_{k-l',j} + u_{k-l,j}u_{k-l',j-1}}_{6}) \Bigg)
    \end{align*}
    Terms marked with 5 will cancel when summing over $j \in \Z_{M}$, $k \in \Z_{K}$, $l \in \Z_{K}$ and $l' \in \Z_{K}$ by recalling that $\lambda_{k}^{k-l,k-l'}= \lambda_{k}^{k-l',k-l} = \lambda_{k-l}^{k,k-l'}$. The same will happen for terms marked with 6. \\
\end{itemize}
\end{proof}

\bibliographystyle{siam}
\bibliography{bibliography}


\end{document}